\newcommand\red[1]{#1}
\newif\ifcmts
\newcounter{sideremark}
\def\Z{{\mathbb Z}} \def\R{{\mathbb R}}  
\long\def\comment#1\endcomment{}
\def\id{\mathop{\fam0 id}}
\def\lk{\mathop{\fam0 lk}}
\def\Int{\mathaccent"7017 }
\def\im{\mathop{\fam0 im}}
\newcommand{\boundary}{\ensuremath{\partial}}
\newcommand{\iprod}{\raisebox{-0.5ex}{\scalebox{1.8}{$\cdot$}}\,}
\theoremstyle{plain}
\newtheorem{Theorem}{Theorem}[section]
\newtheorem{Lemma}[Theorem]{Lemma}
\newtheorem{Proposition}[Theorem]{Proposition}
\theoremstyle{definition}
\newtheorem{Remark}[Theorem]{Remark}
\newcommand{\define}[1]{\textbf{#1}}
\begin{document}

\title{Eliminating Higher-Multiplicity Intersections, III.\\ Codimension 2
\thanks{Research supported by the Swiss National Science Foundation (Project SNSF-PP00P2-138948), by the Russian Foundation for Basic Research Grant No. 15-01-06302, by Simons-IUM Fellowship and by the D. Zimin's Dynasty Foundation Grant.
\newline
We would like to thank A. Klyachko, V. Krushkal, S. Melikhov, M. Tancer, P. Teichner and anonymous referees for helpful discussions.}
}

\def\istsymb{\textrm{a}}
\def\moscowsymb{\textrm{b}}

\author{S. Avvakumov$^{\istsymb}$ \and I. Mabillard$^{\istsymb}$ \and A. Skopenkov$^{\moscowsymb}$
\and U. Wagner$^{\istsymb}$}

\date{}

\maketitle

{\renewcommand\thefootnote{\istsymb}
\footnotetext{IST Austria, Am Campus 1, 3400 Klosterneuburg, Austria. Email: \texttt{savvakum@ist.ac.at, imabillard@ist.ac.at, uli@ist.ac.at}}
}

{\renewcommand\thefootnote{\moscowsymb}
\footnotetext{Moscow Institute of Physics and Technology,
Institutskiy per., Dolgoprudnyi, 141700, Russia, and Independent University of Moscow.
B. Vlasy\-ev\-skiy, 11, Moscow, 119002, Russia. Email: \texttt{skopenko@mccme.ru}}
}

\vspace{-3ex}
\begin{abstract}
\noindent We study conditions under which a finite simplicial complex $K$ can be mapped to
$\R^d$ without higher-multiplicity intersections.
An \emph{almost $r$-embedding} is a map $f\colon K\to \R^d$ such that the images of any $r$ pairwise disjoint simplices of $K$ do not have a common point.
We show that if $r$ is not a prime power and $d\geq 2r+1$, then there is a counterexample to the topological Tverberg conjecture, i.e., {\it there is an almost $r$-embedding of the $(d+1)(r-1)$-simplex in $\R^d$}.
This improves on previous constructions of counterexamples (for $d\geq 3r$) based on a series of papers
by M. \"Ozaydin, M. Gromov, P. Blagojevi\'{c}, F. Frick, G. Ziegler, and the second and fourth present authors.

The counterexamples are obtained by proving the following algebraic criterion in codimension 2:
{\it If $r\ge3$ and if $K$ is a finite $2(r-1)$-complex then there exists an almost $r$-embedding $K\to \R^{2r}$
if and only if there exists a general position PL map $f \colon K\to \R^{2r}$ such that the algebraic intersection number of the $f$-images of any $r$ pairwise disjoint simplices of $K$ is zero.}
This result can be restated in terms of cohomological obstructions or equivariant maps, and extends an analogous codimension $3$ criterion by the second and fourth authors.
As an application we classify \emph{ornaments} $f\colon S^3 \sqcup S^3 \sqcup S^3\to \R^5$ up to \emph{ornament concordance}.

It follows from work of M. Freedman, V. Krushkal, and P. Teichner that the analogous criterion for $r=2$ is false.
We prove a lemma on singular higher-dimensional Borromean rings, yielding an elementary
proof of the counterexample.
\end{abstract}

\noindent
{\em MSC 2010}: 57Q35, 52A35, 55S91.%

%{\em Keywords:}.
%    \input{vankam2}

\tableofcontents

\section{Main results}

\subsection{The topological Tverberg conjecture and almost $r$-embeddings}\label{s:main}

Throughout this paper, let $K$ be a finite simplicial complex, and let $r$ and $d$ be positive integers.
A map  $f\colon K\to \R^d$ is an \define{almost $r$-embedding} if $f\sigma_1\cap\ldots\cap f\sigma_r=\emptyset$ whenever $\sigma_1,\dots,\sigma_r$ are pairwise disjoint simplices of $K$.
(We stress that this definition depends on the simplicial complex, i.e., a specified triangulation of the underlying polyhedron.)

The well-known \emph{topological Tverberg conjecture}, raised by Bajmoczy and B\'ar\'any~\cite{BB} and Tverberg~\cite[Problem~84]{GS} asserts that
%for $N=(d+1)(r-1)$, the $N$
the $(d+1)(r-1)$-dimensional simplex does not admit an almost $r$-embedding in $\R^d$.
This was proved in the case where $r$ is a prime \cite{BB, BShSz} or a prime power \cite{Oz, vo96}, but the case of arbitrary $r$ remained open and was considered a central unsolved problem of topological combinatorics.

Recently and somewhat unexpectedly, it turned out that for $r$ not a prime power and $d\ge3r$ there are counterexamples to the topological Tverberg conjecture.
The construction of these counterexamples follows an approach proposed in \cite{MW14}, which is based on
\begin{itemize}
\item  a general algebraic criterion for the existence of almost $r$-embeddings in \emph{codimension $\ge3$} \cite{MW14, MW} (the \emph{deleted product criterion},
cf. Theorem \ref{t:z-alm3} and Proposition \ref{cor:equiv-alm} below), and

\item a result of \"Ozaydin \cite{Oz} that guarantees that the hypothesis of this criterion is satisfied
whenever $r$ is not a prime power (see \cite[\S3.3 Proof of the \"Ozaydin Theorem 3.5: localization
modulo a prime]{Sk16} for a suitable reformulation and simplified exposition of \"Ozaydin's theorem).
\end{itemize}
There seemed to be a serious obstacle to completing this approach: maps from the
\linebreak
$(d+1)(r-1)$-simplex to $\R^d$ do not satisfy the codimension 3 restriction.
(In a sense, the problem is rather a codimension zero problem.)
Frick \cite{Fr} was the first to realize that this obstacle can be overcome by a beautiful combinatorial trick (Constraint Lemma \ref{p:redu}) discovered  by Gromov \cite{Gr} and independently by Blagojevi\'{c}--Frick--Ziegler \cite{BFZ14}, and that thus the results of \cite{Oz}, \cite{Gr, BFZ14} and \cite{MW} combined yield counterexamples to the topological Tverberg conjecture for $d\geq 3r+1$ whenever $r$ is not a prime power, cf. \cite{Fr, BFZ}.
Using a more involved method (`\emph{prismatic maps}') to overcome the obstacle,
the dimension for the counterexamples was lowered to $d \geq 3r$ in \cite{MW}.
The topological Tverberg conjecture is still open for low dimensions $d<12$, in particular, for $d=2$.

%(In \cite{BFZ14}, the underlying idea of this combinatorial trick is pushed further and developed
%systematically, and it is shown that using the resulting \emph{constraint method} yields many
%implications between `\emph{topological Tverberg-type statements}' that were previously viewed as related
%but independent results.)

For more detailed accounts of the history of the counterexamples, see the surveys \cite{BBZ},
\cite[\S1 and beginning of \S5]{BZ}, \cite{Sk16}, \cite[\S21.4.5]{Zi17}, \cite{BS} and the references therein.

Here, we improve this and show that counterexamples exist  for $d\ge2r+1$ (see also Remark \ref{r:hist}.a):

\begin{Theorem}\label{t:tve}
There is an almost $6$-embedding of the 70-dimensional simplex in $\R^{13}$.

More generally, if $r$ is not a prime power and $d\ge2r+1$, then there is an almost $r$-embedding of the
$(d+1)(r-1)$-simplex in $\R^d$.
\end{Theorem}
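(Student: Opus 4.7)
The plan is to follow the three-step recipe that produced the earlier Tverberg counterexamples (\"Ozaydin's theorem $+$ algebraic criterion for almost $r$-embeddings $+$ Constraint Lemma), but to feed in the new codimension-$2$ criterion stated in the abstract in place of the codimension-$\ge 3$ criterion of Theorem \ref{t:z-alm3} and Proposition \ref{cor:equiv-alm}. Concretely, for $r$ not a prime power and $d\ge 2r+1$, I would first produce an almost $r$-embedding of a suitable $2(r-1)$-dimensional complex $K$ into $\R^{2r}$, and then use the Constraint Lemma \ref{p:redu} to upgrade this to an almost $r$-embedding of $\Delta^{(d+1)(r-1)}$ into $\R^d$. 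The explicit assertion at $r=6$, $d=13$, giving $\Delta^{70}\to\R^{13}$, is then a special case of this general scheme.

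In more detail, Step~$1$ selects the appropriate $2(r-1)$-complex $K$---essentially the complex on which the Constraint Lemma reduces the Tverberg problem in $\R^d$ once $d-2r$ generic linear ``height'' functions on the vertices of $\Delta^{(d+1)(r-1)}$ have been absorbed as extra coordinates---and invokes \"Ozaydin's theorem (in the reformulation of \cite{Sk16}) to produce a symmetric-group-equivariant map from the deleted $r$-fold product of $K$ to the appropriate representation sphere; equivalently, the vanishing of the relevant algebraic $r$-fold intersection numbers. Step~$2$ applies the new codimension-$2$ criterion to convert this algebraic vanishing into an actual almost $r$-embedding $g\colon K\to\R^{2r}$. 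Step~$3$ then feeds $g$ into the Constraint Lemma \ref{p:redu}, together with the $d-2r$ generic height functions from Step~$1$, to extend it to an almost $r$-embedding $\Delta^{(d+1)(r-1)}\to\R^d$. This is the standard Gromov--Blagojevi\'c--Frick--Ziegler reduction, now starting from codimension $2$ instead of codimension $3$, which is precisely what lowers the bound from $d\ge 3r$ to $d\ge 2r+1$.

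The main obstacle is Step~$2$, the codimension-$2$ criterion itself. In codimension $\ge 3$ the $r$-fold Whitney trick of \cite{MW14, MW} establishes the analogous criterion, but in codimension $2$ the classical (two-fold) Whitney trick already fails, and the excerpt notes that the corresponding statement is in fact false at $r=2$ by work of Freedman, Krushkal, and Teichner. The technical heart of the paper must therefore be a delicate $r$-fold Whitney-trick argument that works for $r\ge 3$, circumventing the $\pi_1$ obstructions that kill the $r=2$ case by exploiting the extra flexibility afforded by higher multiplicity. Granted this criterion, the combination of Steps $1$--$3$ above is essentially formal.
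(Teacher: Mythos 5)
Your proposal follows the paper's route exactly: Theorem \ref{c:all}.a with $k=2$ (which combines \"Ozaydin's theorem with the new codimension-$2$ criterion, Theorem \ref{t:z-alm3}) gives an almost $r$-embedding of the $2(r-1)$-skeleton of $\Delta^{(2r+2)(r-1)}$ into $\R^{2r}$, and the Constraint Lemma \ref{p:redu} with $k=2$ upgrades this to the boundary case $d=2r+1$. The only cosmetic difference is that your Step $3$ folds together the Constraint Lemma and the subsequent de Longueville--Skopenkov monotonicity step (passing from $d=2r+1$ to all $d\ge 2r+1$), which the paper keeps as two separate cited reductions, but the underlying argument is identical.
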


Any sufficiently small perturbation of an almost $r$-embedding is again an almost $r$-embedding.
So the existence of a {\it continuous} almost $r$-embedding is equivalent to the existence of a
{\it piecewise linear (PL)} almost $r$-embedding.

A result closely related to the topological Tverberg conjecture is the following theorem, which generalizes a classical theorem (the case $r=2$) of Van Kampen and Flores \cite{VK}, see also Lemma \ref{l:VK-complex} below.

\begin{Theorem}[$r$-fold van Kampen--Flores Theorem; \cite{Sa91}, {\cite[Corollary in \S1]{Vo96'}}]\label{c:vkfg}
If $r$ is a prime power and $k\ge1$, then there is no almost $r$-embedding of the $k(r-1)$-skeleton of the $(kr+2)(r-1)$-simplex in $\R^{kr}$.
\end{Theorem}

The first ingredient for the proof of Theorem \ref{t:tve} is Part~(a) of the following result, which shows that Theorem~\ref{c:vkfg} fails in a strong sense whenever $r$ is not a prime power.

\begin{Theorem}\label{c:all}
\begin{enumerate}[label=\textup{(\alph*)}]
\item If $k\ge2$ and $r$ is not a prime power, then every finite $k(r-1)$-dimensional complex admits an almost $r$-embedding in $\R^{kr}$.

\item For every fixed $k,r\ge2$, $k+r\ge5$, almost $r$-embeddability of finite $k(r-1)$-dimensional complexes in $\R^{kr}$ is decidable in polynomial time.
\end{enumerate}
\end{Theorem}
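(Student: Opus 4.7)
The plan is to combine the algebraic criteria for almost $r$-embeddability with \"Ozaydin's theorem \cite{Oz} on equivariant maps, splitting part (a) according to the codimension $kr-k(r-1)=k$.

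For $k\ge 3$ the codimension is at least three, so the deleted product criterion of Mabillard--Wagner (Theorem \ref{t:z-alm3}) applies: a finite $k(r-1)$-complex $K$ admits an almost $r$-embedding in $\R^{kr}$ if and only if a certain symmetric-group-equivariant map from the $r$-fold combinatorial deleted product of $K$ to the appropriate standard representation sphere exists. \"Ozaydin's theorem produces such an equivariant map for every $K$ whenever $r$ is not a prime power, yielding the almost $r$-embedding. For $k=2$ the codimension drops to two and Theorem \ref{t:z-alm3} fails; here one invokes the codimension-two algebraic criterion announced in the abstract (and to be proved later in the paper). Concretely, take any general position PL map $f\colon K\to\R^{2r}$; for each ordered $r$-tuple $(\sigma_1,\dots,\sigma_r)$ of pairwise disjoint top-dimensional simplices the algebraic intersection number $f(\sigma_1)\algint\cdots\algint f(\sigma_r)$ is defined, and these numbers assemble into an equivariant obstruction cocycle on the deleted product. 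The criterion reduces the existence of an almost $r$-embedding to the vanishing of the associated equivariant cohomology class, which \"Ozaydin's theorem again supplies. This establishes (a) in both codimension ranges.

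For part (b), the same two criteria reduce almost $r$-embeddability to computing an equivariant cohomology class of a chain complex whose total rank is polynomial in $|K|$ (with $k,r$ held fixed), and to deciding whether this class is a coboundary. Both tasks amount to linear algebra over $\Z$ and can be carried out in polynomial time, which gives the claimed decidability.

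The main obstacle is the codimension-two algebraic criterion invoked when $k=2$, which is the principal new technical result of the paper; once it is in hand, everything else is standard. The classical Whitney trick for eliminating excess intersection points breaks down in codimension two, so the proof of the criterion has to replace the trick by a more delicate local move of higher-dimensional singular Borromean-rings / Haefliger type, in order to convert the chain-level vanishing of the $r$-fold intersection cocycle into a genuine geometric cancellation of all $r$-tuple intersection points. Proving that this move is available for $r\ge 3$ (and, by contrast with the $r=2$ Freedman--Krushkal--Teichner obstruction, does not encounter an analogous topological barrier) is where the bulk of the work will lie.
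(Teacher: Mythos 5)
Your proposal follows exactly the same route as the paper: for part (a), combine the algebraic criterion for almost $r$-embeddability (Theorem \ref{t:z-alm3}, which the paper states uniformly for $k\ge2$, $k+r\ge5$, with the $k=2$ case being the new contribution) with \"Ozaydin's equivariant-map theorem (packaged in Propositions \ref{cor:equiv-alm} and \ref{cor:equiv-almoz}); for part (b), use the equivalence with the vanishing of the cohomological obstruction (Proposition \ref{cor:equiv-alm}.(2)), which reduces to integer linear algebra decidable in polynomial time. One small inaccuracy in your final paragraph: the Singular Borromean Rings Lemma \ref{l:bor} is used in the paper only to produce the counterexample at $k=r=2$ (Theorem \ref{t:example-2-4}), whereas the codimension-two criterion for $r\ge3$ is proved via the Surgery of Intersection Lemma \ref{l:surg} and the Global Disjunction Theorem \ref{t:elim}, a Whitney-trick-style argument made to work in codimension two by allowing non-injective modifications.
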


For $k\ge3$, Theorem \ref{c:all} is a consequence of \cite{Oz} and \cite{MW}; for $k=2$, it is a result of this paper.
Theorem \ref{c:all} is deduced from Theorem \ref{t:z-alm3} below in \S\ref{s:disj}.

The second ingredient for the proof of Theorem \ref{t:tve} is the following lemma, which was proved in \cite[2.9.c]{Gr}, \cite[Lemma 4.1.iii and 4.2]{BFZ14} (see also \cite[proof of Theorem 4]{Fr}, \cite[proof of Theorem 3.2]{BFZ} and the surveys \cite[Constraint Lemma 3.2]{Sk16}, \cite[\S4, \S5]{BZ}).

\begin{Lemma}[Constraint]\label{p:redu}
If $k,r$ are integers and there is an almost $r$-embedding of the
\linebreak
$k(r-1)$-skeleton of the $(kr+2)(r-1)$-simplex
in $\R^{kr}$, then there is an almost
$r$-embedding of the $(kr+2)(r-1)$-simplex in $\R^{kr+1}$.
\end{Lemma}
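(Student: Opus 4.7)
The plan is to apply the ``constraint method'' of Gromov \cite{Gr} and Blagojević--Frick--Ziegler \cite{BFZ14}: extend the hypothesized map to the full simplex, adjoin one extra coordinate $\phi$ that vanishes exactly on the $k(r-1)$-skeleton, and rule out $r$-fold coincidences by a dichotomy on whether $\phi$ is zero or positive at the alleged coincidence point.

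Set $N := (kr+2)(r-1)$ and let $g \colon \skel{\Delta^N}{k(r-1)} \to \R^{kr}$ be the given almost $r$-embedding. First, extend $g$ to any continuous (say, PL) map $\bar g \colon \Delta^N \to \R^{kr}$. Second, take any continuous function $\phi \colon \Delta^N \to \R_{\ge 0}$ with $\phi^{-1}(0) = \skel{\Delta^N}{k(r-1)}$; one explicit choice is
\[
\phi(x) := \sum_{\substack{S \subseteq \{0,\ldots,N\}\\ |S| = k(r-1)+2}} \prod_{i\in S}\lambda_i(x),
\]
where $\lambda_0,\ldots,\lambda_N$ are the barycentric coordinates of $x$. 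Each summand is nonzero iff every $\lambda_i$ with $i \in S$ is positive, so $\phi(x) = 0$ iff the carrier of $x$ has at most $k(r-1)+1$ vertices, i.e., iff $x$ lies in the $k(r-1)$-skeleton. Finally, define $G := (\bar g, \phi) \colon \Delta^N \to \R^{kr+1}$.

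To verify that $G$ is an almost $r$-embedding, suppose $\sigma_1, \ldots, \sigma_r$ are pairwise disjoint faces of $\Delta^N$ and $x_i \in \sigma_i$ satisfy $G(x_1) = \cdots = G(x_r)$. Pairwise disjointness bounds the combined vertex count, yielding
\[
\sum_{i=1}^r \dim \sigma_i \;\le\; N + 1 - r \;=\; (kr+1)(r-1).
\]
If $c := \phi(x_1) = \cdots = \phi(x_r) > 0$, then each carrier has dimension $\ge k(r-1)+1$, so $\sum_i \dim \sigma_i \ge r(k(r-1)+1) = (kr+1)(r-1)+1$, contradicting the displayed bound. If $c = 0$, then each $x_i$ lies in $\skel{\Delta^N}{k(r-1)}$, so the carriers $\tau_i \subseteq \sigma_i$ of the $x_i$ are pairwise disjoint faces of the $k(r-1)$-skeleton; since $\bar g$ extends $g$, the common value equals $g(x_i)$ and lies in $g(\tau_1) \cap \cdots \cap g(\tau_r)$, contradicting the hypothesis on $g$. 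The crux is this tight dimension inequality --- one short of what $r$ faces of dimension $\ge k(r-1)+1$ would demand --- which is precisely what the choice $N = (kr+2)(r-1)$ is engineered to provide; beyond that, the only thing requiring care is checking that the combinatorial description of $\phi^{-1}(0)$ coincides with the $k(r-1)$-skeleton, which is immediate from the barycentric formula.
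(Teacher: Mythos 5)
Your proof is correct and follows the standard constraint method of Gromov and Blagojevi\'c--Frick--Ziegler, which is exactly the argument in the references the paper cites for this lemma (the paper itself does not reprove it). The dichotomy on the auxiliary coordinate $\phi$ together with the vertex-count bound $\sum_i \dim\sigma_i \le N+1-r = (kr+1)(r-1)$ is precisely the mechanism used there, so there is nothing to fix.
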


Before we proceed, we first show how to derive counterexamples to the topological Tverberg conjecture from these results:

\begin{proof}[Proof of Theorem~\ref{t:tve}]
It is well-known that the general case $d\geq 2r+1$ follows from the `boundary' case $d=2r+1$ \cite[Proposition 2.5]{Lo}, \cite[Lemma 3.1]{Sk16}.
To prove the boundary case, suppose $r$ is not a prime power and let $k=2$. By Theorem \ref{c:all}~(a), there is an almost $r$-embedding of the $2(r-1)$-skeleton of the $(2r+2)(r-1)$-simplex
in $\R^{kr}$. Thus, by Lemma~\ref{p:redu}, there exists an almost
$r$-embedding of the $(2r+2)(r-1)$-simplex in $\R^{kr+1}$.
\end{proof}

The proof of Theorem~\ref{c:all} is based on Theorem~\ref{t:z-alm3} below, which is an extension of a general algebraic criterion for the
existence of almost $r$-embeddings in \emph{codimension $\ge3$} \cite{MW14, MW} to \emph{codimension $2$}.

Assume that $\dim K=k(r-1)$ for some $k\ge1$, $r\ge2$, and that $f\colon K\to \R^{kr}$ is a PL map in general position.
Then preimages $y_1,\ldots,y_r\in K$ of any $r$-fold point $y\in\R^{kr}$ (i.e., of a point having
$r$ preimages) lie in the interiors of $k(r-1)$-dimensional simplices of $K$.
Choose arbitrarily an orientation for each of the $k(r-1)$-simplices.
By general position, $f$ is affine on a neighborhood $U_j$ of $y_j$ for each $j=1,\ldots,r$.
Take a positive basis of $k$ vectors in the oriented normal space to oriented $fU_j$.
\define{The $r$-intersection sign} of $y$ is the sign $\pm 1$ of the basis in $\R^{kr}$ formed by $r$ such $k$-bases.%
\footnote{This is classical for $r=2$ \cite{BE82} and is analogous for $r\ge3$, cf. \cite[\S~2.2]{MW}.}
The {\it algebraic $r$-intersection number}  $f(\sigma_1)\iprod \dots \iprod f(\sigma_r) \in \Z$ is defined as the sum of the $r$-intersection signs of all $r$-fold points $y\in f\sigma_1\cap\ldots\cap f\sigma_r$.
We call a PL map $f$ in general position  a \define{$\Z$-almost $r$-embedding} if
$f\sigma_1\iprod\ldots\iprod f\sigma_r=0$ whenever $\sigma_1,\ldots,\sigma_r$ are pairwise disjoint simplices of $K$.
The sign of the algebraic $r$-intersection number depends on an arbitrary choice of orientations for each $\sigma_i$, but the condition $f\sigma_1\iprod\ldots\iprod f\sigma_r=0$ does not.

\begin{Theorem}\label{t:z-alm3}
If $k\ge2$, $k+r\ge5$ and a finite $k(r-1)$-dimensional simplicial complex is $\Z$-almost $r$-embeddable
in $\R^{kr}$, then it is almost $r$-embeddable in $\R^{kr}$.
\end{Theorem}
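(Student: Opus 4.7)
The plan is to follow the Whitney-trick strategy of Mabillard--Wagner~\cite{MW}, which cancels opposite-sign global $r$-fold points by an $r$-ary analog of the classical Whitney move, and to extend it from codimension $\ge 3$ to codimension $2$. For $k\ge 3$ the theorem is immediate from \cite{MW}, so the only new case is $k=2$, which forces $r\ge 3$ since $k+r\ge 5$. Starting from a PL general position $\Z$-almost $r$-embedding $f\colon K\to\R^{2r}$, I would modify $f$ (while preserving general position) so as to eliminate every global $r$-fold point, by induction on their total number.

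The first step is to pair up global $r$-fold points by sign. For each $r$-tuple $(\sigma_1,\ldots,\sigma_r)$ of pairwise disjoint $2(r-1)$-simplices, the vanishing of $f\sigma_1\iprod\cdots\iprod f\sigma_r$ partitions the finitely many points of $f\sigma_1\cap\cdots\cap f\sigma_r$ into pairs $(y^+,y^-)$ of opposite sign. For each such pair I would construct an \emph{$r$-fold Whitney setup}: arcs $\alpha_i\subset\sigma_i$ joining the two preimages in $\sigma_i$, together with an associated $r$-tuple of auxiliary PL disks cobounding small pushoffs of these arcs in $\R^{2r}$. A local finger move guided by these disks should cancel $(y^+,y^-)$, and the procedure terminates once the inductive hypothesis applies.

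The heart of the argument is that this $r$-fold Whitney move can be made to work in codimension~$2$ when $r\ge 3$. In codimension $\ge 3$ one pushes the Whitney disks off all other sheets of $f(K)$ by general position, so the move has no side effects. In codimension $2$ this fails: the disks generically meet other sheets, potentially creating new intersection points. For $r\ge 3$, however, I expect a dimension count to save the argument: any ``parasitic'' intersection that is not already contained in $f\sigma_1\cap\cdots\cap f\sigma_r$ involves fewer than $r$ of the relevant sheets and therefore does not contribute a new global $r$-fold point, while parasitic points that do lie inside $f\sigma_1\cap\cdots\cap f\sigma_r$ are created in cancelling $\pm$ pairs by the topology of the finger move. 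Consequently the net number of un-paired global $r$-fold points strictly decreases at each step.

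The main obstacle will be making this codimension-$2$ argument rigorous, because there is a genuine codimension-$2$ linking obstruction which, as indicated in the abstract, makes the analogous statement false for $r=2$. The critical input for $r\ge 3$ is therefore a lemma showing that the corresponding $r$-component singular linking invariant in $\R^{2r}$ vanishes, with the intuition being that $r\ge 3$ sheets provide enough flexibility to unlink the Whitney disks, whereas the analogous invariant for $r=2$ is precisely the higher-dimensional Borromean obstruction referred to in the abstract. Once such a lemma is in hand, the Whitney disks can be chosen to realize the cancellation cleanly, and the induction goes through.
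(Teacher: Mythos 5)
Your high-level plan (cancel opposite-sign global $r$-fold points one pair at a time, exploiting that new intersections of multiplicity $<r$ are harmless) matches the paper's, but you do not identify the key structural device that actually makes the cancellation work in codimension~$2$, and the place you locate the difficulty is not where the paper locates it.

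The paper does \emph{not} attempt a direct $r$-ary Whitney move in the style of \cite{MW}, nor does it prove a vanishing lemma for an ``$r$-component singular linking invariant'' as you conjecture. Instead, the Global Disjunction Theorem is proved in two stages via Lemma~\ref{t:gdi} and the Surgery of Intersection Lemma~\ref{l:surg}. First, by an inductive sequence of ambient $1$- and $2$-surgeries (``piping'' and ``unpiping'') on the sheets $f\sigma_1,\ldots,f\sigma_{r-1}$, one arranges that the $(r-1)$-fold intersection $f\sigma_1\cap\cdots\cap f\sigma_{r-1}$ contains an \emph{embedded $k$-disk} $\sigma_-$ passing through $x$ and $y$. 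These surgeries freely create new non-global intersections --- this is precisely the ``homotopy, not isotopy'' relaxation highlighted in Remark~\ref{rem:whitney-vs-disjunction}.a, and it is what lets the surgery step go through in codimension~$2$. Second, and this is the point you miss, after this preprocessing the cancellation of $x,y$ reduces to a \emph{binary} disjunction problem between $f\sigma_r$ (of dimension $k(r-1)$) and the $k$-disk $\sigma_-$ inside a $kr$-ball; the relevant codimension is $kr-k=k(r-1)\geq 3$ exactly when $k\geq2$ and $k+r\geq5$, so the classical Hurewicz/null-homotopy argument of Lemma~\ref{l:surg}.b applies with no extra linking obstruction to worry about. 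The case $k=r=2$ fails precisely because then $k(r-1)=2<3$ and this last step has no room, which is the real reason for the threshold $k+r\ge5$; it has nothing to do with ``flexibility from $r\ge3$ sheets unlinking Whitney disks.''

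So the gap in your proposal is concrete: you assume the direct $r$-ary Whitney move of \cite{MW} can be pushed to codimension~$2$ modulo an unproved unlinking lemma, whereas the actual route is to first collapse the $(r-1)$-fold intersection to a low-dimensional embedded disk by surgery (tolerating new low-multiplicity singularities), after which the final move is an ordinary codimension~$\geq3$ Whitney-type cancellation and no new lemma about $r$-fold linking is needed. Without the surgery preprocessing step, your argument has no way to control the codimension-$2$ behaviour of the Whitney disks, and the ``dimension count'' you allude to does not by itself supply one.
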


The case $r=2$ is a classical result of van Kampen, Shapiro and Wu \cite[Lemma 4.2]{Sk08}.
For $k\ge3$ Theorem \ref{t:z-alm3} is the main result of \cite{MW}. In the present paper, we generalize this to $k\geq 2$.

The proof of Theorem~\ref{t:z-alm3} for $k\ge3$ in \cite{MW} is based on a higher multiplicity generalization \cite[Theorem~17]{MW} of the classical Whitney trick \cite{Wh44} (see, e.g., \cite[Whitney Lemma 5.12]{RS72} for a proof of the Whitney trick in the piecewise-linear setting).
Our proof of Theorem~\ref{t:z-alm3} for $k\geq 2$ is based on a further generalization of the higher-multiplicity Whitney trick that works for $k\geq 2$, namely, the Local and Global Disjunction Theorems \ref{l:ld+3} and \ref{t:elim} that we will formulate in the next subsection (\S\ref{s:disj}). Some readers may consider the resulting proof for $k\geq 2$ simpler than the proof for $k\geq 3$ in \cite{MW}. See also Remarks \ref{rem:Whitney-codim2} and \ref{rem:whitney-vs-disjunction} below for further discussion of the proof ideas and related work.

The analogue of Theorem \ref{t:z-alm3} for $r=2$ and $k=1$ is a classical result of graph theory
(the Hanani-Tutte Theorem \cite{Hanani, Tutte}).
This analogue holds in a stronger form: a mod2-analogue of $\Z$-almost $2$-embeddability in $\R^2$ implies planarity.
For $r=2$ and $k\ge3$ see Remark \ref{r:hist}.b.

The following Theorem~\ref{t:example-2-4} shows that the analogue of Theorem \ref{t:z-alm3} fails for $k=r=2$. Freedman, Krushkal, and Teichner \cite{FKT} proved that there is a 2-complex that admits a $\Z$-almost 2-embedding in $\R^4$, but not an embedding in $\R^4$.
(This implies that the Van Kampen obstruction to embeddability, whose definition we recall
before Proposition \ref{cor:equiv-alm} below, is incomplete for 2-complexes in $\R^4$.)
Here, we strengthen their result and show that their 2-complex does not even admit an almost 2-embedding in $\R^4$.

\begin{Theorem}\label{t:example-2-4}
There exists a finite $2$-dimensional complex that admits a $\Z$-almost $2$-embedding in $\R^4$ but does not admit an almost $2$-embedding in $\R^4$.
\end{Theorem}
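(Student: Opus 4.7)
The plan is to construct a 2-complex $K$ modeled on a \emph{singular higher-dimensional Borromean rings} configuration, from which both required properties follow.

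\emph{Construction and $\Z$-almost 2-embeddability.} Following the analogy with the classical Borromean rings of three circles in $\R^3$, define $K$ to consist of three 2-spheres $\Sigma_1,\Sigma_2,\Sigma_3$ possibly augmented by 1- or 2-cells that encode Borromean-type mutual position constraints. To produce a PL general-position map $f\colon K\to\R^4$ with $f\sigma\iprod f\tau=0$ for every pair of disjoint simplices, realize each $\Sigma_i$ in a small disjoint ball of $\R^4$, extend $f$ over the remaining cells in general position, and cancel any leftover nonzero algebraic intersections by local finger moves, which freely introduce opposite-sign pairs of double points in dimension 4. This yields a $\Z$-almost 2-embedding of $K$ in $\R^4$.

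\emph{Failure of almost 2-embeddability.} Suppose toward contradiction that $g\colon K\to\R^4$ is an almost 2-embedding. The disjointness conditions on the cells of $K$, combined with the Borromean structure, yield three pairwise disjoint immersed 2-spheres $g(\Sigma_1),g(\Sigma_2),g(\Sigma_3)\subset\R^4$ together with explicit 3-chains realizing pairwise null-linkings. The \emph{singular higher-dimensional Borromean rings lemma} then asserts that no such configuration can exist: a triple invariant (a higher-dimensional analog of Milnor's $\bar\mu_{123}$) of three 2-spheres in $\R^4$ Borromean-linked in this sense must be nonzero, contradicting the existence of $g$.

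\emph{Main obstacle.} The crux is the Borromean rings lemma itself. In codimension $\ge 3$ the analog is vacuous by Whitney's trick, which converts algebraic vanishing of pairwise intersections into geometric disjointness. In codimension 2 for 2-spheres in $\R^4$, Whitney disks typically intersect the spheres themselves, leaving a residual triple-point obstruction that is not detected by pairwise algebraic invariants. The lemma's elementary proof proceeds by counting triple points in a generically chosen bounding 3-chain and identifying their signed total with a Borromean invariant of the sphere system, which is nonzero by construction of $K$. This replaces, in an elementary way, the classical Massey-product argument behind the Freedman--Krushkal--Teichner non-embeddability result.
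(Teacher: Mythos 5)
Your high-level strategy matches the paper's: build a Freedman--Krushkal--Teichner-style $2$-complex, argue $\Z$-almost $2$-embeddability, and deduce impossibility of almost $2$-embeddability from a singular Borromean-rings lemma. However, the proposal has several concrete gaps that a reader could not fill in.

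\textbf{Wrong shape of the Borromean configuration.} You propose to apply a Borromean-rings lemma to three pairwise disjoint immersed $2$-spheres in $\R^4$. But the paper's Singular Borromean Rings Lemma~\ref{l:bor} is not about three spheres; it is about a \emph{torus} $T=S^l\times S^l$ (with its meridian $m$ and parallel $p$) together with two spheres $S^n_p$, $S^n_m$ in $\R^{n+l+1}$, where $n=2l$. For the theorem at hand one takes $n=2$, $l=1$: a torus and two $2$-spheres in $\R^4$. This distinction is essential: as noted in Remark~\ref{rem:bor1}(a), the analogue for three spheres of equal dimension is \emph{false} in general. The torus structure (and, crucially, the fact that $m$ and $p$ meet in a single point) is what makes the triple-intersection count nonzero in the proof.

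\textbf{No concrete complex and no source of linking.} Saying ``three $2$-spheres augmented by cells encoding Borromean-type constraints'' is not a construction, and you do not explain how an almost $2$-embedding would ever be forced to produce the required linkings. In the paper, $K$ is built from two copies of the van Kampen complex (the $2$-skeleton of the $6$-simplex) with one $2$-face removed, glued along a torus. The \emph{unlinking} conditions come from explicit cones $p_4\ast p\subset K$ disjoint from $S^2_m$, and the \emph{linking} conditions ($fp$ linked mod $2$ with $fS^2_p$, etc.) come from the van Kampen Lemma~\ref{l:VK-complex}: the total number of global $2$-fold points of any general-position map of the $2$-skeleton of the $6$-simplex to $\R^4$ is odd. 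Without such a mechanism, your claim that an almost $2$-embedding ``yields\ldots explicit $3$-chains realizing pairwise null-linkings'' and then leads to a contradiction has no support.

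\textbf{The $\Z$-almost $2$-embeddability argument is broken.} Finger moves (Whitney trick in reverse) create pairs of double points of \emph{opposite} signs; they therefore \emph{preserve} the algebraic intersection number $f\sigma\iprod f\tau$ for any given pair $\sigma,\tau$. They cannot ``cancel any leftover nonzero algebraic intersections.'' The actual content (referred to in \cite[\S3.3]{FKT}) is a cohomological computation that the generalized van Kampen obstruction of the specific complex $K$ vanishes, which depends on the precise cell structure; moreover a generic complex containing a copy of the van Kampen complex does \emph{not} have vanishing obstruction, so the careful removal of a $2$-face is load-bearing. Putting the spheres in disjoint balls and extending generically is not enough, since the extra cells tying them together can reintroduce nonzero intersection numbers.

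In summary, your outline points in the right direction and even correctly identifies that an ``elementary, geometric'' proof of a singular Borromean lemma is the crux; but as written, the construction is unspecified, the Borromean lemma you invoke is stated for the wrong objects (and its three-sphere version is actually false), the linking conditions have no provenance, and the $\Z$-almost $2$-embeddability step uses an incorrect mechanism.
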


Theorem \ref{t:example-2-4} is deduced from the Singular Borromean Rings Lemma \ref{l:bor} below in \S\ref{s:FKT-example}.
This deduction is essentially known \cite{FKT}, \cite[\S7]{Sk08}.

To conclude this subsection,  we state a reformulation of $\Z$-almost $r$-embeddability in $\R^{kr}$, which allows one to deduce Theorem  \ref{c:all} from Theorem \ref{t:z-alm3}.

Let {\it the simplicial $r$-fold deleted product} $K^{\times r}_\Delta$ of $K$ be
\[
K^{\times r}_{\Delta} :=
\bigcup \{ \sigma_1 \times \cdots \times \sigma_r
\; |\; \sigma_i \textrm{ a simplex of }K,
\sigma_i \cap \sigma_j = \emptyset \mbox{ for all $i \neq j$} \},
\]
on which the symmetric group $\mathfrak{S}_r$ acts by permuting the factors.

The group $\mathfrak{S}_r$ acts on the set of real $d\times r$-matrices by permuting the columns.
Denote by $S^{d(r-1)-1}_{\mathfrak{S}_r}$ the set of such matrices with sum in each row equal to zero, and the sum of squares of the matrix elements equal to 1.
This set is homeomorphic to the sphere of dimension $d(r-1)-1$.
This set is invariant under the action of $\mathfrak{S}_r$.

For any general position PL map $f\colon K\to \R^{kr}$, the {\it generalized van Kampen obstruction}
(which is an element of the equivariant cohomology group $H^{kr(r-1)}_{\mathfrak{S}_r}(K^{\times r}_\Delta;\Z_T)$)
is represented by the {\it intersection cocycle} that assigns to each $kr(r-1)$-cell
$\sigma_1\times\ldots\times \sigma_r$ of $K^{\times r}_\Delta$ the algebraic intersection number $f\sigma_1\iprod\ldots\iprod f\sigma_r$; see \cite[\S4]{MW} for details.
The triviality of the obstruction means that the intersection cocycle is null-cohomologous.

\begin{Proposition}\label{cor:equiv-alm} \cite{MW}
Let $K$ be a finite $k(r-1)$-dimensional simplicial complex.
For $k\ge1$ the following conditions are equivalent:
\begin{enumerate}[label=\textup{(\arabic*)}]
%\item[\textup{(1)}]$K$ is almost $r$-embeddable in $\R^{2r}$.
\item
$K$ is $\Z$-almost $r$-embeddable in $\R^{kr}$.
\item
The generalized van Kampen obstruction to $\Z$-almost $r$-embeddability of $K$ in $\R^{kr}$ is zero.
\item
There exists a $\mathfrak{S}_r$-equivariant map $K^{\times r}_{\Delta} \to S^{kr(r-1)-1}_{\mathfrak{S}_r}$.
\end{enumerate}
\end{Proposition}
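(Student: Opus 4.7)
The plan is to recognize (2) and (3) as two incarnations of a single primary cohomological obstruction, and then to bridge (1) and (2) via an $r$-fold finger-move cancellation. The unifying object is the equivariant composition
\[
\Phi_f\colon K^{\times r}_\Delta \xrightarrow{\,f^{\times r}\,} (\R^{kr})^r \setminus \Delta \xrightarrow{\;\simeq\;} S^{kr(r-1)-1}_{\mathfrak{S}_r},
\]
where $f\colon K\to\R^{kr}$ is a general position PL map, $\Delta$ denotes the thin diagonal, and the second arrow is the equivariant deformation retraction that subtracts from each column the coordinatewise mean and then normalizes. In general position, $f^{\times r}$ meets $\Delta$ only in the interiors of top-dimensional cells $\sigma_1\times\dots\times\sigma_r$ of $K^{\times r}_\Delta$, at finitely many points; hence $\Phi_f$ is well defined and $\mathfrak{S}_r$-equivariant on the entire $(kr(r-1)-1)$-skeleton.

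For (3)$\Leftrightarrow$(2) I would apply equivariant obstruction theory. The action of $\mathfrak{S}_r$ on $K^{\times r}_\Delta$ is free, since pairwise disjoint simplices are pairwise distinct; the target sphere is $(kr(r-1)-2)$-connected; and $\dim K^{\times r}_\Delta = kr(r-1)$. So the only obstruction to extending $\Phi_f$ equivariantly over $K^{\times r}_\Delta$ lies in $H^{kr(r-1)}_{\mathfrak{S}_r}\!\bigl(K^{\times r}_\Delta;\Z^{\mathrm{sgn}}\bigr)$, where the sign twist reflects that transposing two columns reverses the orientation of $S^{kr(r-1)-1}_{\mathfrak{S}_r}$ in top degree. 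A direct local calculation identifies this obstruction cocycle on $\sigma_1\times\dots\times\sigma_r$ with $f\sigma_1\iprod\dots\iprod f\sigma_r$, which is the intersection cocycle. Independence of the cohomology class on the choice of $f$ follows by applying the same calculation to a general position PL homotopy between two such maps.

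The direction (1)$\Rightarrow$(2) is immediate. For (2)$\Rightarrow$(1) I would use $r$-fold finger moves. Given a cochain $b$ with $\delta b$ equal to the intersection cocycle of $f$, for each $(kr(r-1)-1)$-cell $\tau$ of $K^{\times r}_\Delta$ one pushes a small disk in the image of one factor locally across the images of the remaining factors, creating (or removing) a single global $r$-fold point whose sign changes the intersection cocycle by $\pm 1$ on each of the two top cells incident to $\tau$. Performing these moves simultaneously over $\mathfrak{S}_r$-orbits, using an equivariant cochain representative of $b$, cancels the intersection cocycle entry by entry and produces a $\Z$-almost $r$-embedding $f'$.

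The main obstacle is precisely this finger-move step: one must verify that a single push produces exactly one pair of canceling global $r$-fold points and no collateral multiple intersections elsewhere, and that symmetrization over an $\mathfrak{S}_r$-orbit does not destructively interfere. Codimension $k\ge 1$ affords enough room to confine each move to a small ball, and the correct sign bookkeeping against $\Z^{\mathrm{sgn}}$ is the delicate combinatorial point, carried out in detail in \cite[\S4]{MW}.
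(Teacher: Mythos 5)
Your proposal follows precisely the route of the references that the paper itself merely cites: the equivalence $(2)\Leftrightarrow(3)$ via equivariant obstruction theory applied to the Gauss-type map $\Phi_f$ (this is \cite[Theorem~40]{MW}, \cite[Proposition~3.6]{Sk16}), and $(2)\Rightarrow(1)$ via $r$-fold finger moves realizing elementary coboundaries (this is \cite[Corollary~44]{MW}). So the approach is the right one, and the framing through the single composite $\Phi_f$ is a clean way to unify the two cohomological conditions.

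One place where the description goes off, though: your finger move across a codimension-one cell $\tau=\sigma_1\times\dots\times\sigma_r$ of $K^{\times r}_\Delta$ should \emph{not} create a single global $r$-fold point affecting exactly two top cells. Pushing a small disk of $f\sigma_1$ around the image of the lower-dimensional factor sweeps it across the images of \emph{all} top simplices $\sigma_i'\supset\sigma_i$ that keep the $r$-tuple pairwise disjoint, creating one new $r$-fold point for each such top cell incident to $\tau$; that is exactly why the change in the intersection cocycle is the coboundary $\delta\mathbf{1}_\tau$ rather than the indicator of one top cell. In a simplicial complex a codimension-one face generally lies in many top faces, not two, so the ``two top cells'' count and the ``single global $r$-fold point'' picture are both inaccurate. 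Relatedly, the coefficient system is $\Z$ twisted by the orientation character of the $\mathfrak{S}_r$-action on $K^{\times r}_\Delta$ together with its action on $\pi_{kr(r-1)-1}$ of the target sphere; a transposition reverses the sphere's orientation only when $kr$ is odd, so calling it plainly $\Z^{\mathrm{sgn}}$ is not right for all $k,r$. Neither slip changes the overall strategy, but both would need repair in a complete write-up.
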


\begin{proof}
The implication (1) $\Rightarrow$ (2) is trivial.
The implication (2) $\Rightarrow$ (1) is \cite[Corollary~44]{MW}.
The equivalence (2) $\Leftrightarrow$ (3) is proved using equivariant obstruction theory, see \cite[Theorem~40]{MW}, \cite[Proposition 3.6]{Sk16}.
\end{proof}

\begin{Proposition}\label{cor:equiv-almoz} \cite{Oz, MW}
Let $K$ be a finite $k(r-1)$-dimensional simplicial complex.
If $r$ is not a prime power, then every $k(r-1)$-complex admits a $\Z$-almost $r$-embedding in $\R^{kr}$.
\end{Proposition}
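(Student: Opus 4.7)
My plan is short, since the statement is essentially the combination of two prior results. By Proposition \ref{cor:equiv-alm}, the existence of a $\Z$-almost $r$-embedding $K\to\R^{kr}$ is equivalent to the existence of an $\mathfrak{S}_r$-equivariant map $K^{\times r}_\Delta \to S^{kr(r-1)-1}_{\mathfrak{S}_r}$. So the whole task reduces to producing such an equivariant map under the sole assumption that $r$ is not a prime power.

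First I would check the standing hypotheses for obstruction theory. The $\mathfrak{S}_r$-action on $K^{\times r}_\Delta$ is free, because any non-identity permutation of $r$ pairwise disjoint simplices moves every point of $\sigma_1\times\cdots\times\sigma_r$. Moreover $\dim K^{\times r}_\Delta \le r\cdot k(r-1) = kr(r-1)$, which is exactly one more than the dimension of the target sphere. Since the sphere $S^{kr(r-1)-1}_{\mathfrak{S}_r}$ is $(kr(r-1)-2)$-connected, equivariant obstruction theory (Dold, tom Dieck) lets us always build the equivariant map on the $(kr(r-1)-1)$-skeleton of $K^{\times r}_\Delta$ and reduces the extension problem to a single primary obstruction class living in the equivariant cohomology group
\[
H^{kr(r-1)}_{\mathfrak{S}_r}\bigl(K^{\times r}_\Delta;\ \pi_{kr(r-1)-1}(S^{kr(r-1)-1}_{\mathfrak{S}_r})\bigr),
\]
with the natural twisted $\mathfrak{S}_r$-action on the coefficient module.

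The main step is then to show that this obstruction vanishes whenever $r$ is not a prime power; this is exactly \"Ozaydin's theorem \cite{Oz} (see \cite[\S3.3]{Sk16} for a streamlined reformulation), which I would invoke as a black box. Morally, the reason is a Sylow-transfer argument: for every prime $p$, the hypothesis that $r$ is not a power of $p$ forces every Sylow $p$-subgroup of $\mathfrak{S}_r$ to act non-transitively on $\{1,\dots,r\}$, and Smith theory together with the transfer then kills the restriction of the obstruction to each Sylow subgroup; summing across all primes annihilates the global class. The hard part of the argument is precisely \"Ozaydin's theorem itself, which I would not reprove; everything else is routine equivariant obstruction-theoretic bookkeeping together with the already-established equivalence of Proposition \ref{cor:equiv-alm}.
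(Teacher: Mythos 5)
Your proposal is correct and follows essentially the same route as the paper: check $\dim K^{\times r}_\Delta \le kr(r-1)$, invoke \"Ozaydin's theorem to obtain an $\mathfrak{S}_r$-equivariant map $K^{\times r}_\Delta \to S^{kr(r-1)-1}_{\mathfrak{S}_r}$, and then apply the implication $(3)\Rightarrow(1)$ of Proposition~\ref{cor:equiv-alm}. The only difference is presentational: you re-derive the obstruction-theoretic reduction (freeness of the action, connectivity of the target, primary obstruction class) before invoking \"Ozaydin to kill the obstruction, whereas the paper applies \"Ozaydin's theorem directly in the form that already produces the equivariant map --- so your obstruction-theory preamble, while correct, is redundant given the form of the black box you cite.
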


\begin{proof}
Since $\dim K=k(r-1)$ we have $\dim K^{\times r}_{\Delta} \leq kr(r-1)$.
Now the proposition follows from the implication (3) $\Rightarrow$ (1) of Proposition \ref{cor:equiv-alm}, and
the following result due to \"Ozaydin \cite{Oz} \cite[the \"Ozaydin' Theorem 3.5]{Sk16}:
{\it If $r$ is not a prime power and $\dim K^{\times r}_{\Delta} \leq d(r-1)$, then there is a $\mathfrak{S}_r$-equivariant map $K^{\times r}_{\Delta}\to S^{d(r-1)-1}_{\mathfrak{S}_r}$}.
\end{proof}

\begin{proof}[Proof of Theorem \ref{c:all}]
Part (a) follows from Theorem \ref{t:z-alm3} together with Proposition \ref{cor:equiv-almoz}.

Part (b) follows because by Theorem \ref{t:z-alm3} (together with its trivial converse)
{\it for each $k\ge2$, $k+r\ge5$, almost $r$-embeddability of a $k(r-1)$-complex $K$ in $\R^{kr}$ is equivalent to
each property of Proposition \ref{cor:equiv-alm}.}
Of these, Property (2) is decidable in polynomial time, see \cite[p.~32, Proof of Corollary~9]{MW}
(this is based on algorithms for solving system of linear equations over the integers \cite{St96}).
\end{proof}

\red{Although we drew much inspiration from \cite{MW, Sk00} (and so from other papers which inspired us earlier), this paper is written in a way that, apart from Propositions~\ref{cor:equiv-alm} and \ref{cor:equiv-almoz} and some minor definitions and propositions from \cite{MW}, is formally independent of \cite{MW,Sk00}}

\subsection{Ideas of the proof of Theorem~\ref{t:z-alm3}: Disjunction Theorems}\label{s:disj}

We first formulate the simpler Local Disjunction Theorem, which we consider interesting in itself and which illuminates in simple terms `the core' of the proof of Theorem~\ref{t:z-alm3}.

Let $B^{d}:=[0,1]^d$ denote the standard PL ball and $S^{d-1}=\partial B^d$ the standard PL sphere.
We need to speak about PL balls of different dimensions and we will use the word `disk' for lower-dimensional
objects and `ball' for higher-dimensional ones in order to clarify the distinction (even though, formally, the disk $D^d$ is the same as the ball $B^d$).
We denote by $\partial M$, respectively $\Int M$, the boundary, respectively the interior, of a manifold $M$.
A map $f\colon M \to B^d$ from a manifold with boundary to a ball is called \define{proper},
if $f^{-1}S^{d-1}=\partial M$.
In this paper we work in the PL category, in particular, all disks, balls and maps are PL.

Denote by
$$D=D_1\sqcup\ldots\sqcup D_r$$
the disjoint union of $r$ disks of dimension $k(r-1)$.

\begin{Theorem}[Local Disjunction]\label{l:ld+3}
If $k\ge2$ and $f:D\to B^{kr}$ is a proper general position PL map such that $fD_1\iprod\ldots\iprod f D_r=0\in\Z$, then there is a proper general position PL map $f':D\to B^{kr}$ such that $f'=f$ on $\partial D$ and
$f'D_1\cap\ldots\cap f' D_r=\emptyset$.
\end{Theorem}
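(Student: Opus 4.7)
My plan is to adapt the multi-fold Whitney trick used in the codimension-$\ge 3$ case (as in \cite{MW}), performing all modifications by homotopy rather than ambient isotopy. The gain from working with homotopies is that isolated interior defects of the Whitney disk can be absorbed as new self-intersections within a single $D_i$, which do not count as $r$-fold points since the preimages of an $r$-fold point must lie in $r$ pairwise disjoint $D_i$. Since $fD_1\iprod\cdots\iprod fD_r=0$, the finitely many global $r$-fold points of $f$ partition into pairs $(y^+_s,y^-_s)$ of opposite $r$-intersection sign; it suffices to cancel one pair at a time by a compactly supported PL homotopy of $f$ rel $\partial D$ that creates no new global $r$-fold point, and to iterate.

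For a fixed pair $(y^+,y^-)$, general position gives unique preimages $p^\pm_i\in\Int D_i$; pick disjoint PL arcs $\alpha_i\subset\Int D_i$ from $p^+_i$ to $p^-_i$, avoiding the preimages of all other global $r$-fold points. Write $X:=f(D_2)\cap\cdots\cap f(D_r)$, which by general position is a PL $k$-manifold near $y^\pm$, and let $\beta\subset X$ be a PL arc from $y^+$ to $y^-$. Because $B^{kr}$ is simply connected, the loop $f\alpha_1\cup\beta$ bounds a PL $2$-disk (the ``Whitney disk'') $W\subset B^{kr}$. The move to perform is to push $f|_{D_1}$ along $W$ in a regular neighborhood $\mathcal N(W)$, replacing $f\alpha_1$ by a parallel copy of $\beta$ and thereby removing $y^\pm$ from the $r$-fold intersection.

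The heart of the argument is controlling what else the push creates. Generic dimension counts give
\[
\dim(\Int W\cap fD_1)=2-k,\qquad \dim(\Int W\cap X)=2-k(r-1).
\]
For $k\ge 3$ both are negative and the push introduces nothing new, recovering the argument of \cite{MW}. For $k=2$ and $r\ge 3$, the set $X$ has codimension $2r-2\ge 4$, so general position arranges $\Int W\cap X=\emptyset$; the remaining isolated points of $\Int W\cap fD_1$ become only self-intersections of $fD_1$ after the push, which are harmless, and this is precisely where allowing homotopy (rather than isotopy) is essential. For the borderline case $k=r=2$, where generically $\Int W\cap fD_2\ne\emptyset$, I would first clean $W$ by a tubing step along arcs in $fD_2$ (exploiting that $D_2$ is simply connected) to arrange $\Int W\cap fD_2=\emptyset$ at the cost of further self-intersections inside $fD_2$.

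The main obstacle throughout is ensuring that no Whitney push converts an interior defect of $W$ into a new global $r$-fold point; the case $k=r=2$, where this is not immediate from a codimension count, requires the extra tubing step sketched above. Once every pair has been cancelled, a small generic perturbation restores general position, and the resulting $f'$ is proper (all homotopies are supported in $\Int B^{kr}$), coincides with $f$ on $\partial D$, and satisfies $f'D_1\cap\cdots\cap f'D_r=\emptyset$, as required.
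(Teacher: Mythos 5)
Your proposal correctly identifies the key insight---that performing modifications by homotopy rather than ambient isotopy is what makes codimension $2$ accessible, since isolated defects of the Whitney disk become harmless self-intersections---and your dimension counts for $\Int W\cap fD_1$ and $\Int W\cap X$ are correct. But there is a genuine gap at the very first step of constructing $W$: you write ``let $\beta\subset X$ be a PL arc from $y^+$ to $y^-$'' where $X := f(D_2)\cap\cdots\cap f(D_r)$, yet there is no reason for $y^+$ and $y^-$ to lie in the same path-component of $X$. For $r=2$ this is automatic ($X=fD_2$ is a disk), which is why the classical Whitney trick goes through; for $r\ge 3$ the $(r-1)$-fold intersection $X$ is a $k$-dimensional set that is typically disconnected, and the two points you wish to cancel may well lie in different components, in which case no such $\beta$ exists and the construction cannot even begin.

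Arranging this connectivity is precisely the crux of the paper's argument. Lemma~\ref{t:gdi}, proved inductively via the Surgery of Intersection Lemma~\ref{l:surg}.a, modifies $f$ on $\sigma_1,\ldots,\sigma_{r-1}$ one sheet at a time by ambient $1$-surgery (``piping'' along a path) followed by ambient $2$-surgery (``unpiping'' along a small $2$-disk), so that after $r-1$ steps the points $x,y$ lie in the interior of an \emph{embedded} $k$-disk $\sigma_-\subset f\sigma_1\cap\cdots\cap f\sigma_{r-1}$; controlling the side effects of these surgeries without creating new global $r$-fold points is exactly where the constraint $k\ge 2$ is used and where the argument is delicate. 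Only then is the pair cancelled---and the paper does even this not by a geometric Whitney push but by a homotopy-theoretic step (Lemma~\ref{l:surg}.b): Alexander duality shows that $f|_{S^{p-1}}$ is null-homotopic in $B^d\setminus gD^q$, giving a (possibly non-embedded) extension over $D^p$ into the complement, which is precisely where singularities are allowed. Your plan omits the connectivity step entirely; that step is the new content that makes the $r\ge 3$, codimension-$2$ case work, so the proposal as written does not constitute a proof.
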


The condition $fD_1\iprod\ldots\iprod f D_r=0$ can be called {\it algebraic triviality}, and the condition
\linebreak
$fD_1\cap\ldots\cap fD_r=\emptyset$ can be called {\it geometric triviality}.

The case $r=2$ of Theorem \ref{l:ld+3} is known, see Remark \ref{r:hist}.c.
The case $k\ge3$ is essentially proved in \cite[Theorem 17]{MW} (in fact, the case $k\ge3$ of Theorem \ref{l:ld+3} is the only part of quite technical \cite[Theorem 17]{MW} required to prove Theorem \ref{t:z-alm3} for $k\ge3$).
The case $r\ge3$, $k=2$ is a result of this paper.

Theorem \ref{l:ld+3} for $r\ge3$ follows from the Global Disjunction Theorem \ref{t:elim}.(a)-(b) below.

The analogue of Theorem~\ref{l:ld+3} for $k=1$ clearly holds when $r=2$ and fails for each $r\ge3$:

\begin{Theorem}\label{c:nld1}
For each $r\ge3$ take $k=1$ in the definition of $D$.
Then there is a proper general position PL map $f:D\to B^r$ such that $fD_1\iprod\ldots\iprod f D_r=0$
but there is no proper general position PL map $f':D\to B^r$ such that $f'=f$ on $\partial D$ and $f'D_1\cap\ldots\cap f'D_r=\emptyset$.
\end{Theorem}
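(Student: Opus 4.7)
The plan is to exhibit, for each $r\ge3$, an explicit configuration $f:D\to B^r$ whose algebraic $r$-fold intersection number vanishes but whose $r$-fold geometric intersection cannot be eliminated by a homotopy fixed on $\partial D$, and to detect the obstruction by a higher linking invariant that refines the algebraic intersection number.

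First I would produce the example. For $r=3$, take $fD_1$ and $fD_2$ to be two $2$-disks in $B^3$ meeting transversely in a single arc $\alpha_{12}$ (the two pages of a ``book''), and take $fD_3$ to be a further $2$-disk meeting $fD_1$ transversely in an arc $\alpha_{13}$ and $fD_2$ transversely in an arc $\alpha_{23}$, arranged so that inside $fD_3$ the arcs $\alpha_{13}$ and $\alpha_{23}$ cross transversely in exactly two points of opposite sign. These two points are precisely $fD_1\cap fD_2\cap fD_3$, so $fD_1\iprod fD_2\iprod fD_3 = 0$, while the three disks cannot be made disjoint without changing $f$ on $\partial D$. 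For $r\ge4$ I would lift the $r=3$ model to $B^r=B^3\times B^{r-3}$ by taking the product of each of the three disks with a small copy of $B^{r-3}$, and adjoin $r-3$ additional $(r-1)$-disks placed disjointly and ``in coordinate planes'' far from the model; this keeps the algebraic $r$-fold intersection zero and concentrates the geometric $r$-fold intersection at the two original triple points.

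The key step is to obstruct the existence of $f'$. Suppose for contradiction that $f'D_1\cap\cdots\cap f'D_r=\emptyset$. Then for each pair $i<j$ the double curve $\Gamma_{ij}:=f'D_i\cap f'D_j$ is a properly embedded $1$-manifold in $B^r$, and for each third index $k$ it lies in the complement of $f'D_k$ by the disjointness hypothesis. From the collection $(\Gamma_{ij},f'D_k)$ together with the boundary data $f|_{\partial D}$ one can construct a triple-linking expression that is invariant under homotopies of $f'$ rel $\partial D$ preserving empty $r$-fold intersection. For $r=3$ this is essentially a Milnor $\bar\mu_{123}$-type invariant of the link induced by $f'$ on the double curves, and a direct computation on the Step-1 model evaluates it to $\pm 1$, contradicting the existence of $f'$. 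For $r\ge4$ the product structure in the Step-1 construction reduces the obstruction to the $r=3$ case.

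The main obstacle is Step 2: making this linking invariant precise with well-controlled indeterminacy, parallel to the indeterminacy analysis for classical Milnor $\bar\mu$-invariants. An alternative route I would consider, avoiding the bare-hands construction of the invariant, is to argue by direct reduction: glue $f'$ to the reflection of $f$ across $\partial B^r$ to produce a closed configuration of $r$ cycles with empty $r$-fold intersection, and translate the resulting data into a map whose existence is ruled out either by an elementary higher-Borromean linking computation or, in the dimensions where they apply, by (an analog of) the Singular Borromean Rings Lemma~\ref{l:bor}.
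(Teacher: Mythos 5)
Your plan has two real problems, one conceptual and one structural, and the paper's actual proof sidesteps both.

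\textbf{The structural problem: the $r\ge4$ construction collapses.} You propose to lift the $r=3$ model by taking $fD_i\times B^{r-3}$ for $i=1,2,3$ and then ``adjoin $r-3$ additional $(r-1)$-disks placed disjointly \ldots far from the model.'' But if $fD_4,\ldots,fD_r$ are disjoint from $fD_1\cup fD_2\cup fD_3$, then $fD_1\cap\cdots\cap fD_r=\emptyset$ already, and $f$ itself satisfies the conclusion you want to rule out. Any fix would require all $r$ disks to have nonempty common intersection, at which point the product trick gives you no control over the geometry. By contrast, the paper (Lemma~\ref{l:cdm1}) chooses $f$ on $D_2\sqcup\cdots\sqcup D_r$ directly so that $M:=fD_2\cap\cdots\cap fD_r$ is a proper $1$-manifold in $B^r$ with exactly four boundary points $\{p_1,p_2,n_1,n_2\}$ of signs $+,+,-,-$ --- a single construction that works for all $r\ge3$ at once.

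\textbf{The conceptual problem: the obstruction step is left unspecified, and the paper avoids it entirely.} You acknowledge that Step~2 --- building a Milnor $\bar\mu_{123}$-type invariant with controlled indeterminacy --- is ``the main obstacle,'' and your alternative (gluing and appealing to a Borromean-rings-type lemma) is likewise speculative. The paper replaces all of this with an elementary separation argument. It prescribes $f|_{\partial D_1}$ to be a figure-eight curve in $S^2\subset S^{r-1}$ (joined with $\id_{S^{r-4}}$ for $r\ge4$) whose mod-$\Z$ linking numbers with the four boundary points are $\lk(8,p_1)=+1$, $\lk(8,p_2)=-1$, $\lk(8,n_1)=\lk(8,n_2)=0$. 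This gives property~(2) of Lemma~\ref{l:cdm1}: for any proper extension $g$ and any path $\lambda$ from $p_j$ to $n_i$, $gD_1\iprod\lambda=(-1)^j$. Vanishing of $fD_1\iprod\cdots\iprod fD_r$ then follows from $-1+1+0=0$ (pairing $p_j$ with $n_j$ in $M$). And for any other extension $g$ with $g=f$ on $\partial D$, the $(r-1)$-fold intersection $M'=gD_2\cap\cdots\cap gD_r$ is a relative $1$-cycle with the same four boundary points, so it contains a path $\lambda_1$ from (say) $p_1$ to $n_1$; property~(2) shows $p_1$ and $n_1$ lie on opposite sides of $gD_1$, forcing $gD_1\cap\lambda_1\neq\emptyset$ and hence $gD_1\cap\cdots\cap gD_r\neq\emptyset$. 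No higher linking invariant, no indeterminacy analysis --- just the intersection number of a loop with two points. You should look at how the boundary data $f|_{\partial D_1}$ is what carries the obstruction; your construction leaves that data implicit, which is exactly why the invariant seems hard to pin down.
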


\begin{figure}[h]
\centerline{\includegraphics[width=5cm]{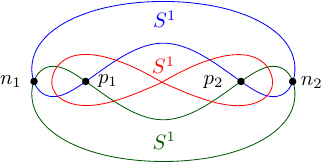} }
\caption{The boundary of an example corresponding to Theorem~\ref{c:nld1} for $r=3$.}
\label{f:r=3}
\end{figure}

As an example corresponding to Theorem~\ref{c:nld1} one can take an extension of the map $f|_{\partial D}$
constructed in the proof of Lemma~\ref{l:cdm1} below.
For $r=3$ see Figure \ref{f:r=3}; this construction might be known.
For $r=3$ Theorem~\ref{c:nld1} could be reproved using Figure \ref{f:r=3} and \cite{Me03}.

The Local Disjunction Theorem \ref{l:ld+3} can be {\it globalized}, i.e. generalized to other connected orientable manifolds instead of disks and balls, including closed manifolds in $\R^d$ rather than proper manifolds in $B^d$.
For $k\ge3$ see \cite[Theorem 17]{MW}, for $k=r=2$ see \cite{ST12} and references therein.
Let us state a {\it polyhedral} global version required  to prove Theorem \ref{t:z-alm3}.
(For {\it ornamental} global versions see \S\ref{s:ornsta}.)

We call a point $y\in\R^d$ a \define{global $r$-fold point} of a map $f\colon K\to \R^d$ if $y$ has
$r$ preimages lying in pairwise disjoint simplices of $K$, i.e., $y\in f\sigma_1\cap\ldots\cap f\sigma_r$
and $\sigma_i\cap\sigma_j=\emptyset$ for $i\ne j$.

(Thus, $f$ is an almost $r$-embedding if and only if it has no global $r$-fold points.)

\smallskip
\noindent\textbf{Assertion $\boldsymbol{(D_{k,r})}$.}
{\it Let
\begin{itemize}
%$\bullet$ $r\ge2$ and $k\ge1$ be integers,

\item $K$ be a finite $k(r-1)$-dimensional simplicial complex,

\item $f\colon K\to B^{kr}$ a general position PL map,

\item $\sigma_1,\ldots,\sigma_r$ pairwise disjoint simplices of $K$,

\item $x,y\in f\sigma_1\cap\ldots\cap f\sigma_r\subset\Int B^{kr}$ two global $r$-fold points of opposite $r$-intersections signs.
\end{itemize}
\noindent
Then there is a general position PL map $f'\colon K\to B^{kr}$ such that $f=f'$ on $K-(\Int\sigma_1\sqcup\dots\sqcup\Int\sigma_r)$, and $f'$ has the same global $r$-fold points with the same signs as $f$ except that $x,y$ are not global $r$-fold points of $f'$.}

\medskip
This can be informally described as  `cancelation of a pair of global $r$-fold points of opposite sign'.
The Local Disjunction Theorem~\ref{l:ld+3} is such a cancelation {\it in a restricted local situation}.
So these are partial analogues of the Whitney trick, but we prefer a self-descriptive name.

Assertion $(D_{1,2})$ is a version of `redrawing of a graph in the plane' \cite[\S4]{Sc}.
It would be interesting to know if it is true.

Theorem~\ref{t:z-alm3} ($\Z$-almost $r$-embeddability implies almost $r$-embeddability),
as well as Theorem \ref{thm_correspondance3} below (classification of ornaments) follow from the following Global Disjunction Theorems~\ref{t:elim}.(a)-(b).

\begin{Theorem}[Global Disjunction]\label{t:elim}
\begin{enumerate}[label=\textup{(\alph*)}]

\item \cite{MW} Assertion $(D_{k,r})$  is true for each $k\ge3$ and $r\ge2$.

\item Assertion $(D_{2,r})$  is true for each $r\ge3$.

\item Assertion $(D_{2,2})$  is false.

\item Assertion $(D_{1,r})$  is false for each $r\ge3$.
\end{enumerate}
\end{Theorem}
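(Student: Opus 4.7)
The plan is to dispose of parts (c) and (d) by short reductions to results already stated, and then to tackle parts (a) and (b) by a unified \emph{multiple Whitney trick}, treating the codimension-$3$ and codimension-$2$ cases in parallel. For part (d), the map constructed in Theorem~\ref{c:nld1} directly witnesses the failure of $(D_{1,r})$: its two opposite-sign $r$-fold points in $\Int B^r$ cannot be cancelled by any PL modification supported on $\Int D$, which is exactly the negation of $(D_{1,r})$. For part (c), if $(D_{2,2})$ held, then starting from the $\Z$-almost $2$-embedding $g\colon K\to \R^4$ provided by Theorem~\ref{t:example-2-4}, we would pair up oppositely signed double points on each pair of disjoint simplices (using that the algebraic intersection $g\sigma_1\iprod g\sigma_2$ vanishes) and cancel them one by one by applying $(D_{2,2})$, yielding an almost $2$-embedding and contradicting Theorem~\ref{t:example-2-4}.

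For parts (a) and (b), fix opposite-sign global $r$-fold points $x,y\in f\sigma_1\cap\dots\cap f\sigma_r$ together with their preimages $x_i,y_i\in\Int\sigma_i$. The plan proceeds in three steps. First, in each $\sigma_i$, choose a PL arc $\alpha_i\subset \Int\sigma_i$ joining $x_i$ to $y_i$ so that $f|_{\alpha_i}$ is an embedding and the images $f\alpha_1,\dots,f\alpha_r$ meet pairwise only at $\{x,y\}$; this is possible because $\dim \sigma_i = k(r-1)\ge 2$ and we are in general position. Second, construct Whitney-type membranes $W_2,\dots,W_r$ in $B^{kr}$ whose boundaries pair $f\alpha_1$ with $f\alpha_j$ at $x$ and $y$, exploiting the fact that the opposite sign condition on $x,y$ produces the compatible normal framings that let such membranes exist. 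Third, perform a guided regular homotopy (``finger moves'') of the sheets $f\sigma_j$ across the $W_j$'s, cancelling $x$ and $y$ as $r$-fold points while leaving $f$ unchanged outside small neighborhoods of $\alpha_1\cup\dots\cup\alpha_r$, and in particular preserving all other global $r$-fold points with their signs.

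The main obstacle, and the heart of the codimension-$2$ argument, is step three. In codimension $2$ the Whitney membranes $W_j$ are generically not embedded and intersect the other sheets $f\sigma_\ell$ in $1$-dimensional sets, so a naive finger move risks creating new global $r$-fold points. The conceptual novelty — which also simplifies the codimension-$\ge 3$ proof of part (a) compared to \cite{MW} — is that we work up to \emph{homotopy}, insisting only that the multiplicity remains strictly below $r$, rather than up to isotopy as in the classical Whitney trick. A dimension count in $B^{kr}$ will show that when $k\ge 2$ and $r\ge 3$, any new intersection point produced by a finger move of $f\sigma_j$ across $W_j$ lies on at most $r-1$ sheets simultaneously, hence no new global $r$-fold point is introduced; this is where the hypotheses $k\ge 2$ in (b) and $r\ge 3$ in (b) both enter crucially. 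The same count fails precisely for $r=2$: a new self-intersection of the Whitney $2$-disk with $f\sigma_2$ would itself be a global $2$-fold point, consistent with part (c).
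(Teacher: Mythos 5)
Your treatment of parts (c) and (d) coincides with the paper's: $(D_{2,2})$ would imply the negation of Theorem~\ref{t:example-2-4}, and $(D_{1,r})$ would imply the negation of Theorem~\ref{c:nld1}, by the same pairwise-cancellation argument used to deduce Theorem~\ref{t:z-alm3} from the Global Disjunction Theorem.

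For parts (a) and (b), however, your plan diverges from the paper's and has a genuine gap in its central step. You propose to bound each circle $f\alpha_1\cup f\alpha_j$ by a Whitney membrane $W_j$ and to finger-move $f\sigma_j$ across $W_j$, for $j=2,\dots,r$, claiming that "the opposite sign condition on $x,y$ produces the compatible normal framings that let such membranes exist." This is exactly where the argument is undischarged. For $r\ge3$ the sheets $f\sigma_1$ and $f\sigma_j$ are not complementary-dimensional in $B^{kr}$: each has dimension $k(r-1)$ and they meet along a $k(r-2)$-dimensional set, not in isolated points. The Whitney framework — where opposite intersection signs of a pair of points are equivalent to the existence of a correctly framed disk whose finger move cancels that pair — is specific to the complementary-dimensional situation. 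Here the $2$-fold sign of $x,y$ with respect to the pair $(f\sigma_1,f\sigma_j)$ is not even defined, and the $r$-fold sign of $x,y$ (a datum involving all $r$ normal $k$-frames simultaneously) does not decompose into the $2$-fold framing data that each $W_j$ would need. A push across an unframed or wrongly framed $W_j$ need not remove $x,y$ from $f\sigma_j$, and even if it does, nothing in your argument prevents it from creating a new $r$-fold point somewhere along $W_j$; your dimension count addresses only the multiplicity of new double points, not whether the cancellation happens at all.

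The paper's proof is structured precisely to avoid this issue. Lemma~\ref{t:gdi}, driven by the Surgery of Intersection Lemma~\ref{l:surg}.a (ambient $1$- and $2$-surgery, ``piping'' and ``unpiping'' performed inside a small regular neighborhood $O\delta$ of an embedded $2$-disk $\delta$), modifies $f$ on one simplex at a time so that after the $(r-1)$-st step, $x$ and $y$ lie in an embedded $k$-disk $\sigma_-\subset f\sigma_1\cap\dots\cap f\sigma_{r-1}$. Now $\sigma_-$ and $f\sigma_r$ \emph{are} complementary-dimensional in $B^{kr}$, and $\sigma_-$ has codimension $k(r-1)\ge3$ exactly when $k+r\ge5$, so the classical codimension-$\ge3$ Whitney trick (Lemma~\ref{l:surg}.b, here a null-homotopy argument via Alexander duality) applies; the opposite $r$-fold signs of $x,y$ translate into opposite $2$-fold signs for the pair $(\sigma_-,f\sigma_r)$ by \cite[Lemma 27.cd]{MW}. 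The restriction $k\ge2$ is what keeps the unpiping disk $\delta$ from meeting $fK$ in more than a finite set, which is what the final "no new global $r$-fold points" check in the paper actually uses. If you want a direct multi-membrane argument along the lines you sketch, you would have to rebuild the framing and cancellation theory for non-complementary-dimensional sheets from scratch, which is far from a dimension count; the paper's reduction to $\sigma_-$ is what makes the classical technology applicable.
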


\begin{proof}[Proof of Theorem~\ref{t:z-alm3} assuming the Global Disjunction Theorems \ref{t:elim}.(a)-(b)]
%assertion $(D_{k,r})$ for each $k\ge2$, $k+r\ge5$]
Let $f\colon K\to \R^{kr}$ be a $\Z$-almost $r$-embedding.
Take pairwise disjoint simplices $\sigma_1,\dots,\sigma_r$ of $K$ with
$f\sigma_1\cap\ldots\cap f\sigma_r\neq \emptyset$.
Since $f$ is a $\Z$-almost $r$-embedding, $f\sigma_1\cap\ldots\cap f\sigma_r$ consists of pairs of global $r$-fold points of opposite sign.
By assertion $(D_{k,r})$, we eliminate these pairs one by one, without introducing any new global $r$-fold points in the process.
By repeating this for every $r$-tuple of pairwise disjoint simplices, we obtain an almost $r$-embedding
$K\to \R^{kr}$.
\end{proof}

The Global Disjunction Theorems \ref{t:elim}.(a)-(b) are proved in \S\ref{s:proasmod}.
The Global Disjunction Theorem \ref{t:elim}.c follows because assertion $(D_{2,2})$ implies the negation of
Theorem \ref{t:example-2-4} analogously to the above proof.
The Global Disjunction Theorem \ref{t:elim}.d follows because assertion $(D_{1,r})$ implies the negation of
Theorem \ref{c:nld1} analogously to the above proof.

\begin{Remark}\label{rem:Whitney-codim2}
It is well-known that the Whitney trick works in codimension $\ge3$ and fails in codimension 2 \cite{KM} without
an assumption of simple connectivity \cite[Whitney~Lemma~5.12.2 and p.~72, the first condition (2)]{RS72}, which is not satisfied in our applications.

Usually it is non-trivial to make `Whitney-trick-arguments' work for codimension $2$; a famous example is Freedman's proof of the Poincar\'e conjecture in dimension 4 \cite[Chapter 13]{Ki}. The non-triviality of Theorem \ref{t:z-alm3} for $k=2$ is also seen from Theorem \ref{t:example-2-4} (which shows that the analogous result for $r=2$ is false) and from Theorems \ref{l:ld+3} and \ref{thm_correspondance3}.a (which show that the analogous result for {\it ornaments} is true even for $r=2$). In other words, the codimension 2 situation is sufficiently delicate to provide different results for different $r$ and for different conditions on $r$-fold intersections.

A crucial insight for making a version of the Whitney trick work in our context is that, unlike in the classical case of embeddings, we can permit singularities (as long as they are of multiplicity less than $r$). This allows us to make modifications by \emph{homotopy} as opposed to isotopy, which gives us more flexibiliy. Together with a restructuring of the arguments, this also leads to a simpler proof of the codimension 3 result, which is presented here and in \cite[\S3.5%\sout{Proof of the Mabillard-Wagner Theorem 3.3: multiple Whitney trick}
]{Sk16}, and which some readers may wish to read before studying the proof for codimension $k\geq 2$ in \S\ref{s:proasmod}. For further comments on the proof ideas and related work, see also Remark~\ref{rem:whitney-vs-disjunction} below.
\end{Remark}

\subsection{Classification of ornaments and doodles}\label{s:ornsta}

In this subsection we describe another application of our methods in the topological context of higher multiplicity linking.

Throughout this subsection $S=S_1\sqcup\ldots\sqcup S_r$ will denote a disjoint union of $r$ copies of $S^n$ and
$D=D_1\sqcup\ldots\sqcup D_r$ a disjoint union of $r$ copies of $D^{n+1}$; the dimensions of $S,D$ will be clear from the context.

An \define{$r$-component $n$-ornament in $S^d$} is a general position PL map $f:S\to S^d$
such that $fS_1\cap\ldots\cap fS_r=\emptyset$.%

Let $r\ge2$ and $f$ be an $r$-component $(k(r-1)-1)$-ornament in $S^{kr-1}$.
Extend $f$ to a general position PL map $g:D\to B^{kr}$ (the extension is constructed e.g., by `coning'
each $f|_{S_i}$ to interior point of $B^{kr}$, a distinct cone point for each component).
Define \define{the $r$-linking number} of $f$ by
$$\lk f := g D_1\iprod \ldots \iprod g D_r\in\Z.$$
%to be the algebraic intersection number of $g D_1,\ldots,g D_r$. , TuGlKo}}
This definition is a natural generalization of the classical linking number (obtained for $r=2$),
and $\mu$-invariant of \cite{FT} (obtained for $r=3$ and $k=1$).%
\footnote{It is also similar in spirit, but different from, the {\it Massey-Milnor triple linking number}
\cite{Po}, \cite[\S4.5 `Massey-Milnor number modulo 2']{Sk},  which distinguishes Borromean rings from the standard link.
The 3-linking number of Borromean rings is not defined, because they do not form
an $r$-component $(k(r-1)-1)$-dimensional ornament in $S^{kr-1}$ for any $k,r$.
For the relation see \cite[Theorem 3]{FT}.}
Analogously to the case $r=2$ one can check that $\lk f$ is well-defined, i.e., is independent of the choices of the extension $g$.%
\footnote{By induction, it suffices to prove this for two extensions $g$ and $g'$ that agree on all but one disk, say $g|_{D_i}=g'|_{D_i}$ for every $i<r$.
Then $gD_r\cup(-g'D_r)$ carries an integer cycle in $B^{kr}$.
This cycle is the boundary of some integral $(k(r-1)+1)$-dimensional chain $C$ in $B^{kr}$ with
$C\cap S^{kr-1}=fS_r$.
Since $f S_{r-1}=-\partial g D_{r-1} \subset S^{kr-1}$, $g D_i \cap S^{kr-1}=f S_i$, and $f$ is an ornament, by \cite[Definition~29 and Lemma~28]{MW} it follows that
$g D_1\iprod \ldots \iprod g D_r - g' D_1\iprod \ldots \iprod g' D_r
= g D_1 \iprod g D_{r-2} \iprod f S_{r-1} \iprod C = 0$.}

Clearly, if an $r$-component $(k(r-1)-1)$-ornament in $S^{kr-1}$ bounds a map $g:D\to B^{kr}$ such that $gD_1\cap\ldots\cap gD_r=\emptyset$, then the ornament has zero $r$-linking number.
The converse is true for every $k\ge2$, which is a generalization of the Local Disjunction Theorem \ref{l:ld+3} and a particular case of Theorem~\ref{thm_correspondance3}.a below.
For $k=1$, the converse clearly holds when $r=2$ and fails for each $r\ge3$ by Theorem \ref{c:nld1}.

Denote $I := [0,1]$.
An \define{ornament concordance} is a map $F: S\times I \rightarrow S^d \times I$ such that
$$F(\cdot,t) \subset S^d \times \{ t \}\quad\text{for each}\quad t=0,1,
\quad\text{and}\quad F(S_1\times I)\cap F(S_2\times I)\cap\ldots\cap F(S_r\times I) = \emptyset.$$
Analogously to the case $r=2$ \cite[\S77]{SeTh80} $\lk f$ is invariant under ornament concordance.

An ornament is called a \define{doodle} if its restriction to each connected component is an embedding.
Likewise, a \define{doodle concordance} is an ornament concordance such that its restriction to each connected component is an embedding.

An ornament [doodle] is called \textbf{trivial} if it is concordant to an ornament [doodle] whose components lie in pairwise disjoint balls.
For $(r-1)d>rn+1$ ($\Leftrightarrow(r-1)(d+1)>r(n+1)$) any $r$-component $n$-ornament in $S^d$ is trivial by general position.

\begin{Theorem}\label{thm_correspondance3}
The $r$-linking number defines a 1-1 correspondence between $\Z$ and the set of
\begin{enumerate}[label=\textup{(\alph*)}]
\item ornament concordance classes of $r$-component $(k(r-1)-1)$-ornaments (or doodles) in $S^{kr-1}$ for each $r,k\ge2$.
\item doodle concordance classes of $r$-component $(k(r-1)-1)$-doodles in $S^{kr-1}$ for each $r\ge2$, $k\ge3$.
\end{enumerate}
\end{Theorem}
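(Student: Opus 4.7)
The plan is to verify three properties of the $r$-linking number $\lk$: well-definedness on ornament concordance classes, surjectivity onto $\Z$, and injectivity. Concordance invariance, together with the independence of $\lk f$ from the choice of extension $g$, is already recorded in the text, analogously to the classical case $r=2$. For surjectivity, I would first exhibit one doodle with $r$-linking number $\pm 1$ via the following generalized Hopf construction: split $\R^{kr} = V_1 \oplus \cdots \oplus V_r$ into $r$ orthogonal blocks of dimension $k$, set $W_i := \bigoplus_{j\neq i} V_j$, and let $S_i := W_i \cap S^{kr-1}$. The spheres $S_i$ are embedded of dimension $k(r-1)-1$, and since $W_1\cap\cdots\cap W_r = \{0\}$ they have empty $r$-fold intersection; extending each $S_i$ to $D_i := W_i \cap B^{kr}$ gives $D_1\cap\cdots\cap D_r = \{0\}$, so $\lk = \pm 1$. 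To realize an arbitrary $n\in\Z$, place $|n|$ disjoint parallel copies of this basic ornament in small balls (applying an ambient reflection to flip sign when $n<0$) and tube the $i$-th components together into a single connected $S_i$ for each $i$, routing the tubes through regions free of global $r$-fold points; this yields a doodle of $r$-linking number $n$.

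For injectivity, let $f_0,f_1$ be ornaments with $\lk f_0 = \lk f_1$, and choose any general-position PL map $F: S\times I \to S^{kr-1}\times I$ with $F|_{S\times\{j\}} = f_j$ for $j=0,1$. By a dimension count the $r$-fold intersection $F(S_1\times I)\cap\cdots\cap F(S_r\times I)$ is finite; let $N\in\Z$ denote its algebraic count. I claim $N = \lk f_1 - \lk f_0$, and hence $N=0$. To see this, cap off $F$ using PL extensions $g_0,g_1\colon D\to B^{kr}_0, B^{kr}_1$ of $f_0,f_1$, producing the closed sphere $S^{kr} = B^{kr}_0 \cup (S^{kr-1}\times I) \cup B^{kr}_1$. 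The combined map sends each ``double'' $M_j := D_j \cup (S_j\times I) \cup D_j \cong S^{k(r-1)}$ into $S^{kr}$; since $k(r-1)<kr$ the map $M_j\to S^{kr}$ is null-homotopic. The algebraic $r$-fold intersection in a closed oriented manifold is homotopy invariant (the $r$-fold intersection set traced by a one-parameter family of maps is a $1$-manifold whose boundary equals the difference of the two endpoint counts), so the total $r$-fold intersection count for the combined map vanishes. Decomposing this total into contributions from $B^{kr}_0$, from $S^{kr-1}\times I$, and from $B^{kr}_1$, which equal $\lk f_0$, $N$, and $-\lk f_1$ respectively under the natural orientations of the two halves of each $M_j$, gives $\lk f_0 + N - \lk f_1 = 0$.

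With algebraic triviality established, I would apply the Global Disjunction Theorem~\ref{t:elim}.(a)--(b) to the $k(r-1)$-complex $K := S\times I$ and the map $F$ into the $kr$-manifold $S^{kr-1}\times I$, cancelling pairs of opposite-sign global $r$-fold points one at a time. Each cancellation move is supported in an arbitrarily small PL ball neighborhood of an arc joining the two intersection points, so replacing the ambient $B^{kr}$ in the statement of $(D_{k,r})$ by $S^{kr-1}\times I$ presents no difficulty. The modified $F$ is then an ornament concordance between $f_0$ and $f_1$, completing part (a). For part (b), the same scheme produces a \emph{doodle} concordance provided the disjunction moves preserve the embeddedness of each component $F|_{S_i\times I}$, which is exactly the injective refinement of Global Disjunction described in Remark~\ref{rem:whitney-vs-disjunction}.(b) and available in codimension $\geq 3$, i.e.\ for $k\geq 3$. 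The main obstacle in this plan will be the precise verification that the injective form of Global Disjunction in $S^{kr-1}\times I$ can be performed while maintaining embeddedness of each component; this reduces, by working in small balls around each arc, to the codimension-$\geq 3$ injective Whitney-type move already established in the codimension-3 setting.
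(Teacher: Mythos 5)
Your proposal is correct and follows essentially the same route as the paper's proof: the same basic Borromean-type ornament for surjectivity (your linear-subspace description $S_i = W_i\cap S^{kr-1}$ is just another picture of the paper's $\partial(B^k\times B^k\times\cdot)$ construction), tubing to realize arbitrary integers, and an application of Global Disjunction Theorem~\ref{t:elim}.(a)--(b) to $K=S\times I$ to cancel opposite-sign global $r$-fold points of an arbitrary general-position concordance. Your ``cap-off'' argument in $S^{kr}$ spelling out why the algebraic count of $r$-fold points of $F$ equals $\lk f_1 - \lk f_0$ is a more explicit rendering of the concordance invariance of $\lk$ that the paper just cites, and your remark on part (b) (injective disjunction via Remark~\ref{rem:whitney-vs-disjunction}.b, available for $k\geq 3$) matches the paper's attribution of part (b) to [MW, Theorem~17].
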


Theorem~\ref{thm_correspondance3} for $r=2$ is well-known.
The case $k\ge3=r$ of Theorem~\ref{thm_correspondance3}.a is due to Melikhov~\cite[p. 7]{Me}.
For $k\geq3$ and each $r$, Theorem~\ref{thm_correspondance3} can be derived from \cite[Theorem 17]{MW}, for Part~(b) using  Remark~\ref{rem:whitney-vs-disjunction}.b. Theorem~\ref{thm_correspondance3}.a for $r\ge3$, $k=2$ is a result of this paper.
Our proof (\S\ref{s:ornpro}) works for any $r,k\ge2$.

The analogue of Theorem \ref{thm_correspondance3}.a for $k=1$ and $r=2$ is clearly true,
for $k=1$ and each $r\ge3$ it is false by Theorem \ref{c:nld1}. See Remark~\ref{rem:ornaments} below for further comments on ornaments.
\medskip

The Local Disjunction Theorem~\ref{l:ld+3} is a particular case of the following `ornamental' analogue
of Theorem \ref{t:z-alm3}.
The existence of an ornament is trivial, so we state a non-trivial {\it relative} version.

\begin{Theorem}\label{t:genorn}
Assume that  $k,r\ge2$,
\begin{itemize}

\item $K=K_1\sqcup\ldots\sqcup K_r$ is a finite simplicial $(k-1)r$-complex,

\item $f:K\to B^{kr}$ is a general position map,

\item $L:=f^{-1}S^{kr-1}\subset K$ is a subcomplex and $f|_L$ is an $r$-component ornament in $S^{kr-1}$,

\item $f\sigma_1\iprod\ldots\iprod f\sigma_r=0\in \Z$
whenever $\sigma_1\subset K_1,\ldots,\sigma_r\subset K_r$ are $(k-1)r$-simplices of $K$.
\end{itemize}
Then there is an $r$-component ornament $f':K\to B^{kr}$ such that $f'=f$ on $L$.
\end{Theorem}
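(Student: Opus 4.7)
The plan is to derive Theorem \ref{t:genorn} from the Global Disjunction Theorem \ref{t:elim}(a)-(b) by the same iterative-cancellation scheme used above to deduce Theorem \ref{t:z-alm3}, with extra care taken to leave $f|_L$ undisturbed. Concretely, I would begin with the given $f$ and repeatedly cancel pairs of opposite-sign global $r$-fold points lying in $fK_1\cap\ldots\cap fK_r$ until none remain.

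First I would characterize the obstruction. The points preventing $f$ from being an ornament are precisely those of $fK_1\cap\ldots\cap fK_r$; because $f|_L$ is already an ornament and $L=f^{-1}S^{kr-1}$, each of them lies in $\Int B^{kr}$. In general position, each such point has its $r$ preimages in the relative interiors of pairwise disjoint top-dimensional simplices $\sigma_i\subset K_i$, so it is a global $r$-fold point of $f$ in the sense of \S\ref{s:main}. The hypothesis $f\sigma_1\iprod\ldots\iprod f\sigma_r=0$ for every such $r$-tuple then forces the finitely many global $r$-fold points in $f\sigma_1\cap\ldots\cap f\sigma_r$ to pair up with opposite $r$-intersection signs.

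Next I would apply assertion $(D_{k,r})$, which is valid in the range $k\ge2$ and $k+r\ge5$ by Theorem \ref{t:elim}(a)-(b), to cancel these pairs one at a time. Each application produces a new general position PL map agreeing with the previous one off $\Int\sigma_1\sqcup\ldots\sqcup\Int\sigma_r$, killing a single pair $x,y$ of opposite-sign global $r$-fold points, and introducing no new global $r$-fold points. Iterating over all pairs and all top-dimensional $r$-tuples $(\sigma_1,\ldots,\sigma_r)$ with $\sigma_i\subset K_i$ terminates, because each step strictly decreases the finite count of global $r$-fold points in $fK_1\cap\ldots\cap fK_r$.

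The step that needs most care---and the only substantive addition relative to the deduction of Theorem \ref{t:z-alm3}---is verifying that every modification leaves $f|_L$ unchanged. Here I would argue as follows: a top-dimensional simplex $\sigma_i$ containing a preimage of a point in $\Int B^{kr}$ cannot be contained in $L$, since $f(L)\subset S^{kr-1}$; because $L$ is a subcomplex of $K$, this yields $\Int\sigma_i\cap L=\emptyset$. Hence the support of each local modification is automatically disjoint from $L$, and the final map $f'$ satisfies both $f'|_L=f|_L$ and $f'K_1\cap\ldots\cap f'K_r=\emptyset$, which is the required ornament. The genuine difficulty of the theorem is thereby fully absorbed into the Global Disjunction Theorem itself.
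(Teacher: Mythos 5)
Your argument is essentially the paper's own proof for the range in which $(D_{k,r})$ is known to hold, and the details you supply are correct: the global $r$-fold points of $f$ that obstruct the ornament condition all lie in $\Int B^{kr}$, each is an isolated intersection point of $r$ top-dimensional simplices $\sigma_i\subset K_i$, the hypothesis pairs them off with opposite signs, and each application of $(D_{k,r})$ cancels a pair while leaving every other global $r$-fold point (and hence the termination measure) under control. Your observation that $\Int\sigma_i\cap L=\emptyset$ --- because $\sigma_i$ has an interior point mapping to $\Int B^{kr}$ while $f(L)\subset S^{kr-1}$ and $L$ is a subcomplex --- is exactly the right reason the local modifications do not disturb $f|_L$, and it matches how the paper reduces to the deduction in the injectivity part of Theorem~\ref{thm_correspondance3}.a.

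However, there is a genuine gap: Theorem~\ref{t:genorn} is asserted for all $k,r\ge2$, whereas $(D_{k,r})$ is only available (Theorem~\ref{t:elim}.(a)-(b)) for $k\ge2$ with $k+r\ge5$. Your argument therefore omits the case $k=r=2$. This case cannot be disposed of by invoking $(D_{2,2})$, since Theorem~\ref{t:elim}.c shows that assertion $(D_{2,2})$ is \emph{false} for general simplicial complexes. The paper handles $r=2$ by citing \cite{Sk00} and remarks that a uniform proof covering $k=r=2$ would require stating and proving an \emph{ornamental} version of the Global Disjunction Theorem --- a variant that exploits the extra structure of the ornament setting (the $r$ pieces live on disjoint components $K_1,\ldots,K_r$, so one need not preserve intersections among simplices inside a single $K_i$) and which holds even when $k=r=2$. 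You should either restrict your statement to $k+r\ge5$, or add the $r=2$ case via \cite{Sk00}, or explain why an ornamental disjunction argument works at $k=r=2$ where the polyhedral one fails.
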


For $r=2$ this is known \cite{Sk00}.
For $r\ge3$ this follows from the Global Disjunction Theorems \ref{t:elim}.(a)-(b) analogously to the above proof of the injectivity in Theorem~\ref{thm_correspondance3}.a.
A proof which works for any $k,r\ge2$ could perhaps be given by stating and proving the ornamental version of
 the Global Disjunction Theorems \ref{t:elim}.(a)-(b) (which works even for $k=r=2$).
It is interesting to compare the case $k=r=2$ of Theorem \ref{t:genorn} to the Global Disjunction Theorems \ref{t:elim}.(c).

\section{Proofs}\label{s:proofs}

\subsection{Proof of the Global Disjunction Theorems \ref{t:elim}.(a)-(b)}\label{s:proasmod}

Informally speaking, the first step in the proof of the Global Disjunction Theorems \ref{t:elim}.(a)-(b) is to make the $(r-1)$-intersection $f\sigma_1\cap\ldots\cap f\sigma_{r-1}$ connected.
See the following Lemmas \ref{l:surg} and \ref{t:gdi}.

Throughout this section, let us fix orientations on balls $B^d$ and disks $D^m$.

\begin{Lemma}[Surgery of Intersection]\label{l:surg}
Assume that $d-2\ge p,q$ and that $f:D^p\to B^d$, $g:D^q\to B^d$ are proper embeddings in general position such
that $fD^p\cap gD^q$ is a proper submanifold (possibly disconnected) of $B^d$ containing points $x,y$.
\begin{enumerate}[label=\textup{(\alph*)}]
\item If $p+q>d$ then there is a proper general position map $f':D^p\to B^d$
with the following properties:
\begin{itemize}
\item $f'=f$ on $\partial D^p$ and on a neighborhood of $\{f^{-1}x,f^{-1}y\}$;
\item $x,y$ lie in the interior of an embedded $(p+q-d)$-disk  contained in $f'D^p\cap gD^q$.
\end{itemize}
\item If $p+q=d\ge q+3$, \ $\{x,y\}=fD^p\cap gD^q$ and $x,y$ have opposite $2$-intersection sign, then there is a general position map $f':D^p\to B^d$ such that $f'=f$ on $\partial D^p$ and $f'D^p\cap gD^q=\emptyset$.
    %$f'D^p\cap gD^q=(fD^p\cap gD^q)\setminus\{x,y\}$.
\end{enumerate}
\end{Lemma}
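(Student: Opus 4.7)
The strategy is to perform an ambient $1$-handle attachment (a finger move) along an auxiliary $2$-disk. I choose embedded arcs $\lambda\subset fD^p$ and $\mu\subset gD^q$ from $x$ to $y$; such arcs exist because $f$ and $g$ are embeddings of connected disks, and by general position I can take them to meet only at $\{x,y\}$ and to have their interiors disjoint from the rest of $fD^p\cap gD^q$. The loop $\lambda\cup\mu\subset B^d$ is null-homotopic since $B^d$ is simply connected, and because $p+q>d$ together with $p,q\leq d-2$ force $p,q\geq 3$ and $d\geq 5$, general position lets me fill the loop by an embedded $2$-disk $W\subset B^d$ that is transverse to $fD^p$ and $gD^q$.

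Next, I build $f'$ by a finger move through $W$. Take a regular neighborhood $N(W)\cong W\times B^{d-2}$, trivialised so that the normal bundles of $\lambda$ in $fD^p$ and of $\mu$ in $gD^q$ each split off the normal line to $W$ in the standard way. Keep $f'=f$ outside a thin regular neighborhood $U$ of $\lambda$ in $fD^p$ chosen to avoid $f^{-1}\{x,y\}$, and inside $U$ bend $f$ across $W$ so that its image is pushed onto a normal push-off of $\mu$ inside $gD^q$. In the resulting local model the bent piece of $f'D^p$ meets $gD^q$ in a transverse $(p+q-d)$-disk passing through both $x$ and $y$, realising the desired embedded intersection disk with $x,y$ in its interior. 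I expect the main obstacle to be the bookkeeping in the local model: the bend has to be arranged so that $x$ and $y$ land in the \emph{interior} of the new intersection disk rather than on its boundary. Any additional isolated intersections of $\mathrm{int}\,W$ with $fD^p\cup gD^q$, which cannot be avoided when $p$ or $q$ equals $d-2$, merely produce harmless extra generic intersections of $f'D^p$ with $gD^q$ away from $\{x,y\}$, and are tolerated by the statement.

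\textbf{Plan for part (b).} This is the classical Whitney trick, and the asymmetric hypothesis $d\ge q+3$ (equivalently $p\ge 3$) suffices precisely because we modify only $f$ and require only $f'D^p\cap gD^q=\emptyset$. Form $\lambda\subset fD^p$, $\mu\subset gD^q$, and a Whitney disk $W\subset B^d$ with $\partial W=\lambda\cup\mu$ as in part (a). Since $\dim W+\dim gD^q=2+q<d$, general position makes $\mathrm{int}\,W$ disjoint from $gD^q$; possible interior intersections of $W$ with $fD^p$ are harmless because they would at worst create self-intersections of $f'$, which the conclusion allows. The opposite-sign hypothesis on $x,y$ gives the standard Whitney framing of the normal $2$-plane bundle of $W$, so a regular neighborhood of $W$ furnishes the standard local model $B^2\times B^{d-2}$ in which $fD^p$ and $gD^q$ appear as the two canonical half-disks meeting at a cancelling pair. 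Swapping one half of $fD^p$ across $W$ then removes both intersection points simultaneously, yielding the desired $f'$. The delicate point I expect is the framing/sign computation certifying that the local model is indeed the standard cancelling one; this is classical in the present codimension range and is carried out as in \cite[Lemma~5.12]{RS72} or \cite[\S12.1]{Ki}.
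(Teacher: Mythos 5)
For part (a), you take a genuinely different route from the paper, and I don't think the route closes without substantially more work. The paper proceeds by ambient $1$-surgery on $M=fD^p\cap gD^q$ (``piping''): it picks interior points $a,b$ of the two components of $M$ containing $x,y$, joins them by an arc $\ell\subset gD^q$, attaches an embedded $1$-handle $L(I\times\partial D^p)$ to $fD^p$ along $\ell$ (after arranging local flatness of $gD^q$ near $\ell$), which turns $fD^p$ into an embedded $(S^1\times S^{p-1})_0$ whose intersection with $gD^q$ is manifestly connected, and then repairs the topology by ambient $2$-surgery (``unpiping'') along a disk capping $h(S^1\times *)$. No Whitney circle or Whitney disk ever enters. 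Your plan instead fills the Whitney circle $\lambda\cup\mu$ by a disk $W$ and ``bends'' $fD^p$ across $W$ towards $\mu$. The issue you yourself flag as ``the main obstacle'' is in my view a real gap rather than bookkeeping: since $f'=f$ on a neighborhood of $f^{-1}\{x,y\}$ and the interior of $\lambda$ is disjoint from $M$, the support $U$ of your bend is a strip around the \emph{middle} of $\lambda$, and the new intersections it creates live near the middle of $\mu$, while the old intersections of $M$ near $x$ and $y$ are untouched. It is not at all clear why these pieces assemble into a single embedded $m$-disk whose \emph{interior} contains both $x$ and $y$; nothing in the finger-move picture forces the new intersection to run all the way into the original components of $M$ at $x$ and $y$. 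The paper's choice of attaching the handle along an arc inside $gD^q$ between auxiliary points $a,b\in M$ (rather than bending along an arc in $fD^p$ from $x$ to $y$) is exactly what makes connectivity of the resulting intersection automatic; your setup does not obviously have this feature.

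For part (b) you also take a different route, but here I think the route is sound. The paper's proof is purely algebraic: it shows that $X:=B^d-gD^q$ is $(p-2)$-connected by general position, identifies $[f|_{S^{p-1}}]\in\pi_{p-1}(X)\cong H_{p-1}(X)\cong\Z$ with the algebraic intersection number $fD^p\iprod gD^q=0$ via the Hurewicz isomorphism and Alexander duality, and concludes that $f|_{S^{p-1}}$ is null-homotopic in $X$, so it extends to $f'\colon D^p\to X$. Your geometric Whitney trick, with the correct observation that interior intersections of $W$ with $fD^p$ are harmless because we allow $f'$ to be non-injective, is a classical alternative and works in the stated range $p\ge 3$; what the paper's argument buys you is that one never needs to discuss the Whitney framing at all, and in fact never needs to produce any disk $W$ --- it is a three-line deduction once the connectivity of $X$ is noted.
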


This lemma is known for $d-3\ge p,q$ (then Part~(b) is the classical Whitney trick, and for Part (a) see Remark~\ref{rem:whitney-vs-disjunction}.a), and Part (b) is known also for $q=2$ \cite[Lemma 2.4]{Sp}.
Passage to $d-2\ge p,q$ in (a), or to $d-2=p$ in (b), requires losing the injectivity properties of $f,g$.
Part~(b) $d-2=p\ge3$ is proved by seeing that $f|_{\partial D^p}$ is null-homotopic in $B^d-gD^q$
(an analogue for $d-2=p=2$ is discussed in Remark \ref{r:hist}.c).

In what follows, we first use the Surgery of Intersection Lemma \ref{l:surg} to prove the following Lemma~\ref{t:gdi} and the Global Disjunction Theorem \ref{t:elim}.(a)-(b).
The proof of the Surgery of Intersection Lemma \ref{l:surg} is then given at the end of this subsection.

In the rest of this section, we abbreviate $B^{kr}$ to $B$.

\begin{Lemma}\label{t:gdi}
Assume that $k,r\ge2$,
\begin{itemize}
\item $K$ is a finite $k(r-1)$-dimensional simplicial complex,

\item $f\colon K\to B$ a general position PL map,

\item $\sigma_1,\ldots,\sigma_r$ are pairwise disjoint simplices of $K$,

\item  $x,y\in f\sigma_1\cap\ldots\cap f\sigma_r\subset\Int B$ are two global $r$-fold points of opposite $r$-intersections signs.
\end{itemize}
Then for each $n=1,\ldots,r-1$ there is a general position PL map $f'\colon K\to B$ such that
\begin{itemize}
\item $f=f'$ on $K-(\Int\sigma_1\sqcup\dots\sqcup\Int\sigma_r)$,

\item  $x,y$ lie in the interior of an embedded $k(r-n)$-disk  contained in $f'\sigma_1\cap\ldots\cap f'\sigma_n$, and

\item  $f'$ has the same global $r$-fold points with the same signs as $f$.
\end{itemize}
\end{Lemma}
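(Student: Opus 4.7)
The plan is to prove Lemma~\ref{t:gdi} by induction on $n$ from $1$ up to $r-1$, at each step enlarging a connected embedded disk that links $x$ and $y$ inside the partial intersection via one application of the Surgery of Intersection Lemma~\ref{l:surg}(a). The base case $n=1$ is immediate: take $f' = f$, and observe that $f\sigma_1$ is an embedded $k(r-1)$-disk (since $f|_{\sigma_1}$ is an affine embedding by general position) whose interior contains both $x$ and $y$.

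For the inductive step $n-1 \to n$, assume we have $f^{(n-1)}$ satisfying the conclusion for $n-1$, with an embedded $k(r-n+1)$-disk $D_{n-1} \subset f^{(n-1)}\sigma_1 \cap \cdots \cap f^{(n-1)}\sigma_{n-1}$ containing $x, y$ in its interior. By general position we choose a PL arc $\gamma \subset \Int D_{n-1}$ from $x$ to $y$ that avoids, away from its endpoints, every locus $D_{n-1} \cap f^{(n-1)}\tau$ for any simplex $\tau$ of $K$ disjoint from $\sigma_n$ (the obstruction has dimension at most $k(r-n)$ in the $k(r-n+1)$-disk $D_{n-1}$, so such $\gamma$ exists precisely because $1 + k(r-n) < k(r-n+1)$, i.e.\ $k \ge 2$). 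We then take a PL regular neighborhood $N$ of $\gamma$ in $B$, so thin that $N$ is a PL ball and $N \cap f^{(n-1)}\tau$ is confined to small neighborhoods of $\{x, y\}$ whenever $\tau$ is disjoint from $\sigma_n$.

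Inside $N$ we apply the Surgery of Intersection Lemma~\ref{l:surg}(a) to the (restricted) proper embeddings $f^{(n-1)}|_{\sigma_n}$ and the inclusion of $D_{n-1}$, with $(p, q, d) = (k(r-1), k(r-n+1), kr)$: note $p + q - d = k(r-n)$, and both codimensions $d - p = k$, $d - q = k(n-1)$ are at least $2$ for $k \ge 2$ and $n \ge 2$. The lemma yields a modification of $f^{(n-1)}|_{\sigma_n}$, supported in the preimage of $N$ and fixing a neighborhood of $\{x, y\}$ together with its boundary, producing an embedded $k(r-n)$-disk $D_n \subset f^{(n)}\sigma_n \cap D_{n-1}$ whose interior contains $x, y$. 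Patching this modification into $f^{(n-1)}$ outside $\Int\sigma_n$ gives $f^{(n)}$, which coincides with $f$ outside $\Int\sigma_1 \sqcup \cdots \sqcup \Int\sigma_r$ and produces the required $D_n \subset f^{(n)}\sigma_1 \cap \cdots \cap f^{(n)}\sigma_n$.

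The main obstacle is preservation of the full set of global $r$-fold points and their signs through the surgery. This is ensured by the stringent choice of $N$: since $f^{(n)}$ differs from $f^{(n-1)}$ only inside $N$, and $N$ meets every $f^{(n-1)}\tau$ with $\tau$ disjoint from $\sigma_n$ and distinct from $\sigma_1, \ldots, \sigma_{n-1}$ only in tiny neighborhoods of $\{x, y\}$ where Lemma~\ref{l:surg}(a) keeps $f$ fixed, no $r$-fold intersection involving $\sigma_n$ together with such a $\tau$ is created or destroyed; intersections with $\sigma_1, \ldots, \sigma_{n-1}$ take place within $D_{n-1}$ by construction and are likewise unaffected in the global count. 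The hypothesis $k \ge 2$ is used twice crucially, in producing the generic arc $\gamma$ and in meeting the codimension requirement of Lemma~\ref{l:surg}(a); it is precisely the collapse of these conditions at $k = r = 2$ that aligns with the obstruction witnessed by Theorem~\ref{t:example-2-4}.
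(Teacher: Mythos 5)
Your high-level plan (induction on $n$, apply Lemma~\ref{l:surg}(a) at each step to enlarge the connected piece of the intersection) matches the paper's, but the localization step has a genuine gap that cannot be patched in the form you wrote it.

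You localize the surgery inside a regular neighborhood $N$ of a generic arc $\gamma\subset\Int D_{n-1}$ joining $x$ to $y$. But such a $\gamma$, being generic in $D_{n-1}$, meets $f^{(n-1)}\sigma_n$ \emph{only} at its endpoints $x,y$ (indeed $\dim\gamma + \dim(D_{n-1}\cap f^{(n-1)}\sigma_n) - \dim D_{n-1}= 1 - k < 0$). Consequently $N\cap f^{(n-1)}\sigma_n$ is the disjoint union of two tiny disks near $x$ and $y$ — it is \emph{not} the image of a single proper embedding $D^p\hookrightarrow N$. Lemma~\ref{l:surg}(a) requires proper embeddings $f\colon D^p\to B^d$, $g\colon D^q\to B^d$, so it simply does not apply with the data you feed it: neither inside $N$ (where $f^{(n-1)}|_{\sigma_n}$ restricts to two disks), nor in the ambient ball $B$ (where neither $f^{(n-1)}\sigma_n$ nor $D_{n-1}$ is proper). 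The paper avoids exactly this by taking paths $\lambda_+\subset f\Int\sigma_n$ and $\lambda_-\subset\sigma_-$ joining two points near $x,y$, spanning an embedded $2$-disk $\delta$ by $\lambda_+\cup\lambda_-$, and localizing inside a regular neighborhood $O\delta$: since $\lambda_+\subset f\sigma_n$ and $\lambda_-\subset\sigma_-$, both $fD_n=f\sigma_n\cap O\delta$ and $\sigma_-\cap O\delta$ become \emph{proper} balls in $O\delta$, which is what Lemma~\ref{l:surg}(a) needs.

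A secondary consequence of your choice of a $1$-dimensional arc rather than a $2$-disk is that you never confront the central codimension-$2$ subtlety: the spanning $2$-disk $\delta$ may, when $k=2$, unavoidably puncture $fK$ at finitely many extra points $p_1,\ldots,p_s$ not on $\lambda_\pm$; the paper shows these only produce lower-multiplicity ($\le n$-fold or $2$-fold) new intersections of $f'$, and since $r>n\ge 2$ no new global $r$-fold points appear. This bookkeeping is what makes the codimension-$2$ case delicate, and it is missing from your account. The rest of your structure — base case, dimension counting $p+q>d$, extending by $f$ outside the support, and the invariance of the other global $r$-fold points — is in line with the paper.
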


\begin{proof}
The proof is by induction on $n$.
The base $n=1$ follows by setting $f'=f$.
The required disk is then a small regular neighborhood in $f\sigma_1$ of a path in $f\sigma_1$ joining $x$ to $y$
and avoiding the self-intersection set $\{x\in K\ :\ | f^{-1}fx| \geq 2\}$ of $f$.

In order to prove the inductive step assume that $n\ge2$ and the points $x,y$ lie in the interior of an embedded $k(r-n+1)$-disk $\sigma_-\subset f\sigma_1\cap\ldots\cap f\sigma_{n-1}$.
By general position
$$\dim(\sigma_-\cap f\sigma_n)\le k(r-n+1)+k(r-1)-kr=k(r-n).$$
Since $f$-preimages of $x$ lie in the interiors of $\sigma_1,\ldots,\sigma_r$, the intersections of $f\sigma_i$ and small regular neighborhoods of $x,y$ in $B$ equal to the intersections of affine spaces and the neighborhoods.
Hence the regular neighborhoods of $x,y$ in $\sigma_-\cap f\sigma_n$ are $k(r-n)$-balls.

Take points $x',y'$ in such balls.
Take general position paths
$\lambda_+\subset f\Int\sigma_n\quad\text{and}\quad \lambda_-\subset \sigma_-$
joining $x'$ to $y'$.
By general position dimension of the self-intersection set of $f$  does not exceed $2k(r-1)-kr<k(r-1)-1$.
So the union $\lambda_+\cup \lambda_-$ is an embedded circle in $\Int B$.
Since $k,r\ge2$, we have $kr\ge4$.
Hence by general position this circle bounds an embedded $2$-disk $\delta\subset\Int B$.
Since $k\ge2$, we have $k(r-1)+2\le kr$.
Hence by general position
$$\delta\cap fK=\lambda_+\cup\lambda_-\sqcup\{fp_1,\ldots,fp_s\}$$
for some points $p_1,\ldots,p_s\in K$ outside the self-intersection set of $f$ and the
$(k(r-1)-1)$-skeleton of $K$, and $s=0$ for $k\ge3$.

Let $O\delta$ be a small regular neighborhood of $\delta$ in $\Int B$.
Then $O\delta$ is a $kr$-ball and
%$\beta\cap fK$ is a regular neighborhood of $\delta\cap fK$ in $fK$. Moreover, $\beta\cap f\sigma_n$ contains
$f^{-1}O\delta$ is the union of

\begin{itemize}
\item a regular neighborhood $D_n\cong D^{k(r-1)}$ of the arc $f|_{\sigma_n}^{-1}\lambda_+$ in $\sigma_n$;

%Let $D_+:=\sigma_n\cap f^{-1}\overline{D_+}$. Then $D_+\cong D^{k(r-1)}$ and

\item regular neighborhoods $D_i\cong D^{k(r-1)}$ of the arcs $f|_{\sigma_i}^{-1}\lambda_-$ in $\sigma_i$
for each $i=1,\ldots,n-1$;

\item pairwise disjoint $k(r-1)$-disks that are regular neighborhoods of $p_j$ in the $k(r-1)$-simplices
of $K$ containing them, for each $j=1,\ldots,s$; these disks are disjoint from the self-intersection set of $f$,
and their $f$-images are disjoint from $fD_1\cup\ldots\cup fD_n$.
\end{itemize}

%($D=\sigma_r\cap f^{-1}\beta$ for $k\ge3$);

Then $f|_{D_i}:D_i\to O\delta$ is proper for each $i=1,\ldots,n$, and $\sigma_-\cap O\delta$ is a proper $k(r-n+1)$-ball in $O\delta$.
Since the regular neighborhoods of $x,y$ in $\sigma_-\cap f\sigma_n$ are $k(r-n)$-balls,
the set $\sigma_-\cap O\delta\cap fD_n$ is a proper $k(r-n)$-submanifold of $O\delta$.
Hence we can apply the Surgery of Intersection Lemma \ref{l:surg}.a to $fD_n$ and $\sigma_-\cap O\delta$ in $O\delta$.
For the obtained map $f':D_n\to O\delta$ the points
$x,y\in f'\sigma_1\cap\ldots\cap f'\sigma_r\subset\Int B$ are two global $r$-fold points of opposite $r$-intersections signs, lying in the interior of an embedded $k(r-n)$-disk  contained in $\sigma_-\cap f'D_n$.
Extend $f'$ by $f$ outside $D_n$.

Clearly, the first two bullet points in the conclusion of Lemma \ref{t:gdi} are fulfilled.
All the global $r$-fold points of $f$ lie outside $O\delta$, and $f=f'$ outside $O\delta$.
Therefore all the global $r$-fold points of $f$ are also global $r$-fold points of $f'$, and they have the same sign.
It remains to check that $f'$ does not have new global $r$-fold points inside $O\delta$.
In $O\delta$ the map $f'$ can have global points of multiplicity at most $n$ in $f'D_n\cap fD_1\cap\ldots\cap fD_{n-1}$, or of multiplicity $2$ in the intersection of $f'D_n$ with the $f$-image of a small neighborhood of some $p_j$.
Since $r>n\geq 2$, none of these global points are $r$-fold.

Thus the map $f'$ is as required.
\end{proof}

\begin{proof}[Proof of the Global Disjunction Theorems \ref{t:elim}.(a)-(b)]
By Lemma \ref{t:gdi} for $n=r-1$ we may assume that the points $x,y$ lie in the interior of an embedded $k$-disk $\sigma_-\subset f\sigma_1\cap\ldots\cap f\sigma_{r-1}$.
Choose orientations of $\sigma_1,\ldots,\sigma_{r-1}$.
These orientations define an orientation on $\sigma_-$ (this is analogous to the definition of the $r$-intersection sign given before Theorem~\ref{t:z-alm3}, cf. \cite[\S2.2]{MW} for a longer formal exposition).
Since $x,y\in f\sigma_1\cap\ldots\cap f\sigma_r$ have opposite $r$-intersections signs,
$x,y\in \sigma_-\cap f\sigma_r$ have opposite $2$-intersections signs \cite[Lemma 27.cd]{MW}.

Analogously to the proof of Lemma \ref{t:gdi} (except that we start from $x,y$ not from $x',y'$) we construct a $kr$-ball $O\delta\subset\Int B$ and $k(r-1)$-disks $D_i\subset\Int\sigma_i$ for $i=1,\ldots,r$,
such that $x,y\in O\delta$ are the only global $r$-fold points in $O\delta$ and $f|_{D_i}:D_i\to O\delta$ is proper.

Since either $r\ge3$ or $k\ge3$, we have $kr\ge\dim\sigma_-+3$.
So we can apply the Surgery of Intersection Lemma \ref{l:surg}.b to $fD_r$ and $\sigma_-\cap O\delta$ in $O\delta$.
For the obtained map $f':D_r\to O\delta$ we have $\sigma_-\cap f'D_r=\emptyset$.
Extend $f'$ by $f$ outside $D_r$.

Clearly, $f=f'$ on $K-(\Int\sigma_1\sqcup\ldots\sqcup\Int\sigma_r)$.
Since $f=f'$ outside of $D_r$, all the global $r$-fold points of $f$ except $x,y$ are also global $r$-fold points of $f'$, and they have the same sign.
It remains to check that $f'D_r$ contains no global $r$-fold points of $f'$.
Recall the description of $f^{-1}O\delta$ from the bullet points in the proof of Lemma \ref{t:gdi}.

If $r = 2$, then $k\geq 3$, so $s = 0$.
Also $O\delta\cap\sigma_-=f(D_1)=f'(D_1)$.
So $f'(K)\cap O\delta = f'(D_1)\sqcup f'(D_2)=(O\delta\cap\sigma_-)\sqcup f'(D_2)$, where the union is disjoint by the construction of $f'$.
Therefore $f'D_2=f'D_r$ contains no global $2$-fold points of $f'$.

If $r>2$, then $f'|_{K\setminus D_r}$ has no $(r-1)$-fold point in $O\delta$ except for $\sigma_-\cap O\delta$.
By construction $\sigma_-\cap f'D_r=\emptyset$, so again $f'D_r$ contains no global $r$-fold points of $f'$.
\end{proof}

\begin{proof}[Proof of the Surgery of Intersection Lemma \ref{l:surg}.a]
To simplify notation, let us write
$$Q:=gD^q \quad\text{and} \quad M:=fD^p\cap Q$$
throughout this proof.
Furthermore, let $m:=\dim M=p+q-d$.
Note that the assumptions on the dimensions $p,q,d$ imply that $m+2\leq p,q$ and $d\geq 5$.

The chosen orientations of $B^d,D^p$, and $D^q$ define an orientation on $M$ (this is analogous to the definition of the $r$-intersection sign given before Theorem~\ref{t:z-alm3}, cf. \cite[\S2.2]{MW} for a longer formal exposition).

Let us first assume that $x$ and $y$ lie in different connected components of $M$.
We proceed in two steps to reduce this case to the case where $x$ and $y$ lie in the same connected component of the intersection;
it will then be easy to deal with the latter situation.
\medskip

\emph{Step 1. Ambient 1-surgery.
%Adding an embedded $1$-handle
\textup{(}``piping''.\textup{)}}
Pick two generic points $a ,b \in M$ such that $a$ lies in the
same connected component of $M$ as $x$, and $b$ lies in the same connected component of $M$ as $y$.
Pick a general position path $\ell \subset Q$ connecting $a$ and $b$.

\begin{figure}[h]
\centerline{\includegraphics[width=16cm]{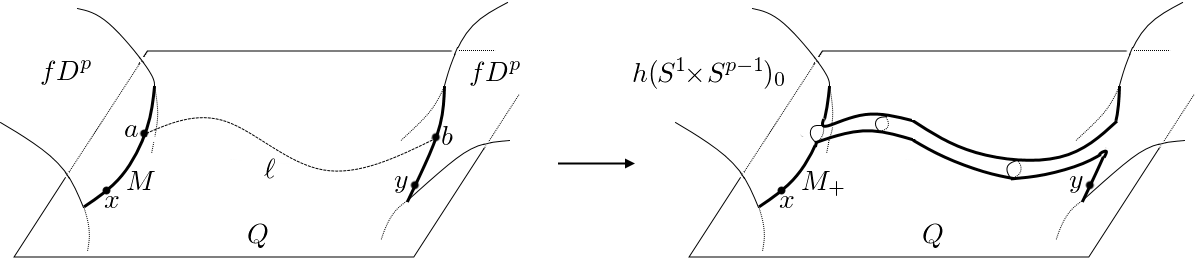} }
\caption{Piping}
\label{f:piping}
\end{figure}

By general position, $\ell$ is disjoint (and hence at a positive distance from) the set of points at
which $Q$ is not locally flat in $B^d$ (see \cite[p.~50]{RS72}
for the definition of local flatness); this follows because {\it the set of non-locally flatness points of the codimension $\geq 2$ submanifold $Q\subset B^d$ has codimension $\ge 2$ in $Q$.}%
\footnote{Let us prove the latter statement.
Take a triangulation of $B^d$ such that $Q$ is a subcomplex of this triangulation.
For each point $c\in Q$ the pair $(\lk_{B^d}c,\lk_Qc)$ is a codimension $\geq 2$ pair of spheres.
If $c$ is outside the codimension 2 skeleton of this subcomplex, then $\dim\lk_Qc\in\{-1,0\}$.
Hence the pair $(\lk_{B^d}c,\lk_Qc)$ is unknotted.
Thus, $Q$ is locally flat in $B^d$ at $c$.}

We now perform ambient 1-surgery on $M$ in $Q$
%add a $1$-handle embedded in $Q$ to $M$
as described in \cite[pp.~67-68]{RS72}
(where this procedure is called ``piping'') to obtain connected manifold $M_+$; more precisely, take an embedding
$L:I\times D^{m}\to Q$ that satisfies the following properties (where we use $m\leq q-2$ for the second property):
\begin{itemize}
\item $L(I\times0)=\ell$,
\item $M \cap L(I\times D^{m}) = L(\{0,1\} \times D^{m})$ is a regular neighborhood of $\{a,b\}$ in $M$,
\item the orientation of $M$ on this neighborhood is compatible with the `boundary' orientation of
$L(\{0,1\} \times D^{p})$, and
\item $L(I\times D^{m})$ is disjoint from $x,y$ and from any non-locally flatness points of $Q$ in $B^d$.
\end{itemize}
We define
$$M_+:= \big(M \setminus L(\{0,1\} \times \Int{D}^{m})\big) \cup L(I\times \partial D^{m}).$$
By construction, $x$ and $y$ lie in the same component of $M_{+}$, and $M_+$ is orientable.
We give $M_+$ the orientation induced by that of $M$.

By general position $fD^p$ and $Q$ are transverse at $\{a,b\}$.
Since $\ell$ does not contain non-locally flatness points of $Q$ in $B^d$,
the submanifold $Q$ is locally flat in $B^d$ in a neighborhood of $\ell$.
Hence, we can extend $L$ to an embedding $L:I\times D^{p}\to B^d$ such that
\begin{itemize}
\item $Q\cap L(I\times D^{p})=L(I\times D^m)$,
\item $fD^p \cap L(I\times D^{p}) = L(\{0,1\} \times D^{p})$ is a regular neighborhood of $\{a,b\}$ in $fD^p$, and
\item the orientation of $fD^p$ on this neighborhood is compatible with the `boundary' orientation of
$f|_{D^p}^{-1}(L(\{0,1\} \times D^{p}))$.
\end{itemize}
Denote by $(S^1\times S^{p-1})_0$ the manifold $S^1\times S^{p-1}$ with an open $p$-disk removed.
Let
$$h\colon (S^1\times S^{p-1})_0 \to B$$
be the proper embedding obtained by adding the embedded $1$-handle $L(I\times \partial D^{p})$
to $fD^p$; thus,
$$\im h = \big(fD^p \setminus L(\{0,1\} \times \Int{D}^{p})\big) \cup L(I\times \partial D^{p}).$$
By construction, the intersection $\im h \cap Q=M_+$ is connected.
(Note that $h$ is an embedding of $(S^1\times S^{p-1})_0$, not of $D^p$; this will be repaired in the next step.)

\medskip
\emph{Step 2. Ambient 2-surgery.
%Adding a singular $2$-handle
\textup{(}``unpiping''\textup{)}.} We now perform ambient 2-surgery on $\im h$ in $Q$
to obtain a proper embedding $f'\colon D^p\to B$ such that $\im h \cap Q=M_+ \subseteq f'D^p \cap Q$
(this is analogous to \cite[Lemma~38]{MW}, where the corresponding operation is called  ``unpiping'').

\begin{figure}[h]
\centerline{\includegraphics[width=12cm]{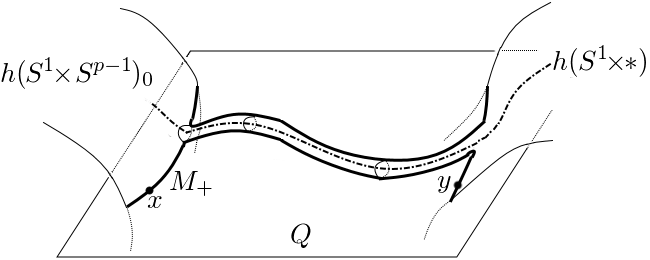} }
\caption{Unpiping}
\label{f:unpiping}
\end{figure}

Pick a point $\ast \in S^{p-1}$ in general position with respect to $h$,
and a general position embedded $2$-disk $\delta \subset B^d$ such that $\partial\delta =h(S^1\times \ast)$.
By general position, $\partial \delta$ is disjoint from $Q$, and $\delta$ intersects $Q$ in a finite set (empty if $q\leq d-3$) of points disjoint from $\im h$.

Denote by $O\delta$ a small regular neighborhood of $\delta$ in $B$.
Take a small regular neighborhood $U\cong D^{p-1}$ of $\ast$ in $S^{p-1}$.
We may assume that $h(S^1\times U)\subset O\delta$. Since $O\delta \cong B^{d}$, the restriction
$S^1\times\partial U\rightarrow O\delta$ of $h$ extends to a map $j\colon D^2\times\partial U\rightarrow O\delta$.

Let
$$\Delta:=\big((S^1\times S^{p-1})_0 \setminus (S^1\times \Int U)\big) \cup (D^2 \times \partial U) \cong D^p.$$
Define
$$
f'\colon \Delta \to B^d \quad\text{by}\quad f'(x):=
\begin{cases}
h(x) &\text{if}\quad x\in N \setminus (S^1\times \Int U),\\
j(x) &\text{if}\quad x\in D^2\times \partial U.
 \end{cases}.
$$
By construction of $f'$, $f=f'$ on $\partial D^p$ and in a neighborhood of $x,y$ (identifying $\Delta$ with $D^p$).
Moreover, $f'\Delta \cap Q$ consists of the manifold $\im h \cap Q$ plus possibly some additional further components.
In particular, $x$ and $y$ lie in the same connected component $f'\Delta \cap Q$, and this component is a manifold of dimension $m=p+q-d$.
\medskip

To complete the proof, take a general position path $\ell \subset f'\Delta \cap Q$ connecting $x$ and $y$.
Then a regular neighborhood of $\ell$ in $f'\Delta \cap Q$ is then an $m$-disk that contains $x$ and $y$.
\end{proof}

\begin{proof}[Proof of the Surgery of Intersection Lemma \ref{l:surg}.b]
Denote $X:=B^d-gD^q$.
Consider the composition
$$\pi_{p-1}(X)\overset{h}\to H_{p-1}(X)\overset{\cong}\to H_0(D^q) \cong\Z$$
of the Hurewicz homomorphism and the (homological) Alexander duality
isomorphism.
This composition carries $[f|_{S^{p-1}}]$ to $fD^p\iprod gD^q$.
\footnote{This is one of the equivalent definitions of Alexander duality isomorphism, see
Alexander Duality Lemmas of \cite{Sk08', Sk10}.
Another equivalent definition is as follows.
Take a small oriented disk $D^p_g\subset B^d$ whose intersection with $gD^q$ consists of exactly one point
of sign $+1$ and such that $\partial D^p_g\subset X$.
Then the Alexander duality carries the generator $\partial D^p_f$ of
$H_{p-1}(X)$ to the generator of $H_0(D^q)$ defined by the orientation of $D^q$.}
The assumptions of part (b) imply that $fD^p\iprod gD^q=0$.
By general position $X$ is $(p-2)$-connected.
Since $p\ge3$, we have $p-2\ge1$, so by the Hurewicz theorem $h$ is an isomorphism.
Hence the restriction $f:S^{p-1}\to X$ is null-homotopic.
Thus there is an extension $f':D^p\to X$ of the restriction.
This is the required map.
\end{proof}

\begin{Remark}\label{rem:whitney-vs-disjunction}
\begin{enumerate}[label=\textup{(\alph*)}]
\item Lemmas and Lemma \ref{t:gdi} are generalizations, to $(r-1)$-multiplicity and to codimension 2, of the `high-connectivity' version of the Whitney trick \cite{Ha63}, \cite[Lemma 4.2]{Ha84}, \cite[Theorem 4.5 and appendix A]{HK}, \cite[Theorem 4.7 and appendix]{CRS}.%
\red{\footnote{\red{As the proof shows, \cite[Theorem 4.5]{HK} and \cite[Theorem 4.7]{CRS} are correct for $s=-1$.
In those results the $(s+1)$-connectivity of the self-intersection set $\widetilde\Delta$ for some reason was replaced by the $(s+1)$-connectivity of the classifying map $\Delta\to\R P^\infty$.
In the proof instead of the homotopy triviality of the composition $S^i\to\Delta\to\R P^\infty$ one can either use
the triviality of a line bundle $\lambda_\Delta|_{S^i}$ for $i>1$, or the fact that the immersion
$\widetilde\Delta\to M^m$ is framed.
Note that $N(\widetilde\Delta,D^n)$ is not defined because $\widetilde\Delta\not\subset D^n$.}}}
%in that result the connectedness of the self-intersection set $\widetilde\Delta$ was obtained
%(in step 1 of the proof) but not put in the statement; it was because for surj on \pi_1
%For $r=3$ the modification is minor, for $r\ge4$ it is more essential.
The lemmas are proved by ambient surgery, i.e. by first adding to $f\sigma_{r-1}$ `an embedded 1-handle' along a path joining $x$ to $y$ in $f\sigma_1\cap\ldots\cap f\sigma_{r-1}$ (which is assumed by induction to be already connected), and then cancelling `an embedded 2-handle' along the `Whitney disk', which for codimension $\ge3$ was done in \cite[\S3]{Ha62} ($r=2$), \cite[proof of Theorem 1.1 in p. 7]{Me} ($r=3$).

\red{An analogous remark holds for a possible direct proof (which we do not present) of the Local Disjunction Theorem \ref{l:ld+3} and $\sigma_1,\ldots,\sigma_{r-1}$ replaced by $D_1,\ldots, D_{r-1}$.}

For a generalization to the `metastable' version see \cite{MW', MW16, Sk17}\red{, \cite{Sk'}}.

\item Applying the Disjunction Theorems in the form presented here may introduce new $r$-fold points (albeit no global ones). On the other hand, for $k\geq 3$, the higher-multiplicity Whitney trick in \cite[Theorem~17]{MW} does not create any new $r$-fold points at all. This difference is immaterial for the study of almost $r$-embeddings or {\it ornaments} (see \S\ref{s:ornsta}),
but it is important in for the study of {\it doodles} (see \S\ref{s:ornsta}).

For $k\geq 3$, our proof can perhaps be modified to show that in the Local Disjunction Theorem \ref{l:ld+3}, under the additional assumption that   $f$ embeds each disk, we may obtain additionally that the resulting map $f'$
embeds each disk, as in \cite[Theorem~17]{MW}. Such an improvement might be obtained by an application of the corresponding (known) `injective' version of the Surgery of Intersection Lemma \ref{l:surg}.

If $k=r=2$, we cannot obtain this (as e.g. disks extending the Whitehead link show).
It would be interesting to know if we can obtain this for $k=2$, $r\ge3$.

\end{enumerate}
\end{Remark}

\subsection{The Singular Borromean Rings and proof of Theorem~\ref{t:example-2-4}}\label{s:FKT-example}

We consider the following lemma (required for Theorem \ref{t:example-2-4}) interesting in itself.

\begin{Lemma}[Singular Borromean Rings]\label{l:bor}
For each $n=2l$ let  $T:=S^l\times S^l$ be the $2l$-dimensional torus with meridian $m:=S^l\times \cdot$ and parallel $p:=\cdot\times S^l$, and let $S^n_p$ and $S^n_m$ be copies of $S^n$.
Then there is no PL map $f\colon T\sqcup S^n_p\sqcup S^n_m\to \R^{n+l+1}$ satisfying the following three properties: %such that
\begin{enumerate}[label=\textup{(\roman*)}]
\item the $f$-images of the components are pairwise disjoint;

\item $fS^n_p$ is linked modulo 2 with $fp$ and is not linked modulo 2 with $fm$,
\footnote{See the well-known definition of `linked modulo 2' e.g. in~\cite[\S77]{SeTh80} or in~\cite[\S4.2 `Linking modulo 2 of curves in space']{Sk}.}
 and

\item $fS^n_m$ is linked modulo 2 with $fm$ and is not linked modulo 2 with $fp$.
%Conditions (b) and (c) could be reformulated as
%$\lk_{\Z_2} (f(S^n_p),f(p)) = \lk_{\Z_2} (f(S^n_m), f(m)) = 1$ and
%$\lk_{\Z_2} (f(S^n_p),f(m)) = \lk_{\Z_2} (f(S^n_m), f(p)) = 0$. }
\end{enumerate}
\end{Lemma}

\begin{proof}
%[Proof of the Singular Borromean Rings Lemma~\ref{l:bor}]
The proof uses a `triple intersection' homology argument analogous to the classical proof showing that Borromean rings are linked \cite{Po}, \cite[\S4.5 `Massey-Milnor number modulo 2']{Sk}.
%It would be interesting to generalize our simpler proof of Lemma \ref{l:bor}
%to simpler proofs of some results of \cite{Kr, KT}.
The reader might want to read this proof first for
%$n=1$ and $l=0$, and then for
$n=2$ and $l=1$.

Assume to the contrary that the map $f$ exists.
Without loss of generality, we may assume that $f$ is in general position.

Throughout the proof all the chains and cycles are assumed to have $\Z_2$ coefficients, and all the equalities are congruences modulo 2.
Since all the chains below are represented by general position polyhedra, chains could be identified with their supporting bodies.
We denote by $\partial$ the boundary of a chain.

We can view $f(T)$, $f(S^n_p)$, and $f(S^n_m)$ as $2l$-, $n$- and $n$-dimensional PL cycles in general position in $\R^{n+l+1}$.
Denote by $C_T$, $C_p$, and $C_m$ singular cones in general position over $f(T)$, $f(S^n_p)$, and $f(S^n_m)$, respectively.
We view these cones as $(2l+1)$-, $(n+1)$- and $(n+1)$-dimensional PL chains.
The contradiction is
$$0\underset{(1)}=|\partial(C_T \cap C_p \cap C_m)|\ \underset{(2)}=
\ |\underbrace{\partial C_T}_{=f(T)} \cap C_p \cap C_m|\ +
\ |C_T \cap \underbrace{\partial C_p}_{=f(S^n_p)}\cap C_m|\ +
\ |C_T \cap  C_p \cap \underbrace{\partial C_m}_{=f(S^n_m)}|\ \underset{(3)}=1+0+0=1.$$
Here (1) follows because $C_T \cap C_p \cap C_m$ is a $1$-dimensional PL chain, so its boundary is $0$.
%We can write this boundary as the sum of three terms
%To reach a contradiction, we show that the second and the third term are $0$, while the first one is $1$.
Equation (2) is Leibniz formula.
So it remains to prove (3).
\phantom\qedhere
%\noqedsymbol
\end{proof}

\begin{proof}[Proof of (3)]
For $X\in \{T,S^n_m,S^n_p\}$ denote $f_X:=f|_X$.

For the {\it second term} we have
$$|C_T \cap f(S^n_p) \cap C_m|
%\ (f(\Sigma_p) \iprod C_T)\iprod (f(\Sigma_p)\iprod C_m)
\ \overset{(*)}=\ |(f_{S^n_p}^{-1}C_T)\cap(f_{S^n_p}^{-1}C_m)|\ \overset{(**)}=\ 0,\quad\text{where}$$
\begin{itemize}
\item[(*)] holds because $(n+1)+(2l+1)+2n<3(n+l+1)$, so by general position $C_T\cap C_m$ avoids self-intersection points of $f(S^n_p)$,

\item[(**)] holds by the well-known higher-dimensional analogue of \cite[Parity Lemma 3.2.c]{Sk14} (which is proved analogously) because the intersecting objects are general position cycles in $S^n_p$; they are cycles because $\partial(C_T \cap f(S^n_p))=0=\partial(C_m \cap f(S^n_p))$ and
$n\le2l\Leftrightarrow(n+1)+2n<2(n+l+1)$, so by general position both $C_T$ and $C_m$ avoid self-intersection points of $f(S^n_p)$.
\end{itemize}
Analogously $|C_T \cap  C_p \cap f(S^n_m)|=0$.

For the {\it first term} we have
$$|f(T) \cap C_p \cap C_m|
%(f(T) \iprod C_p ) \iprod  (f(T) \iprod C_m)
\ \overset{(***)}=\ |(f_T^{-1}C_p)\cap (f_T^{-1}C_m)|\ \overset{(****)}=\ m\cap p\ =\ 1,\quad\text{where}$$
\begin{itemize}
\item[(***)] holds because $n\ge l\Leftrightarrow 2(n+1)+4l<3(n+l+1)$, so by general position $C_p\cap C_m$ avoids self-intersection points of $f(T)$,

\item[(****)] is proved as follows:

The $l$-chain $f_T^{-1}C_p$ is a cycle in $T$ because $\partial(C_p \cap f(T))=0$ and
$n\ge2l\Leftrightarrow n+1+4l<2(n+l+1)$, so by general position $C_p$ avoids self-intersection points of $f(T)$.
By conditions (b) and (c) of Lemma~\ref{l:bor} we have
$$|p\cap f_T^{-1}C_p|\ =\ |f(p)\cap C_p|\ =\ 1 \text{\quad and \quad}
|m\cap f_T^{-1}C_p|\ =\ |f(m)\cap C_p|\ =\ 0.$$
I.e. the cycle $f_T^{-1}C_p$  intersects the parallel $p$ and the meridian $m$ at $1$ and $0$ points modulo $2$, respectively.
Therefore $f_T^{-1}C_p$ is homologous to the meridian $m$.
%, i.e., $(f|_T)^{-1}C_p=m$.
Likewise, $f_T^{-1}C_m$ is homologous to the parallel $p$.
This implies (****).
%$(f|_T)^{-1}C_m=p$.
\end{itemize}
\end{proof}

\noindent\emph{Construction of the $2$-complex in Theorem~\ref{t:example-2-4}.}
We begin by recalling the construction of the $2$-complex $K$ from \cite{FKT}.
Let $P$ be the $2$-skeleton of the $6$-simplex whose vertices are $\{p_1,\dots,p_7\}$.
Let $p:=\boundary [p_1,p_2,p_3]$ denote the boundary of the $2$-simplex $[p_1,p_2,p_3]$.
Denote by $P_-$ the complement in $P$ to (the interior of) the $2$-simplex $[p_1,p_2,p_3]$.
%, which we think of as a PL loop in $V$ starting and ending at $v_1$.
The remaining four vertices $p_4,p_5,p_6,p_7$ span a `complementary' $2$-sphere $S^2_p:=\partial [p_4,p_5,p_6,p_7]  \subset P$ that is the boundary of the $3$-simplex $[p_4,p_5,p_6,p_7]$ (this 3-simplex itself is not contained in $P$).

Let $M_-$ denote a copy of $P_-$ on a disjoint set of vertices $\{m_1,m_2\dots,m_7\}$, and let $m:=\partial[m_1,m_2,m_3]$ and
$S^2_m:=\partial [m_4,m_5,m_6,m_7]$.

The 2-complex $K$ then is defined by the formula
$$K:=(P_-\underset{p_1=m_1}\cup M_-)\underset{p=S^1\times\cdot,\ m=\cdot\times S^1}{\cup} T,$$
where $T$ is the torus $S^1\times S^1$ with any triangulation for which $S^1\times\cdot$, $m=\cdot\times S^1$ are subcomplexes.

\begin{Lemma}\label{l:VK-complex}\cite[Satz~5]{VK}
Let $g\colon P\to \R^4$ be a PL map in general position of the 2-skeleton of the 7-simplex.
Then the number $v(g)$ of intersection points of $f$-images of disjoint triangles (i.e., the total number of global $2$-fold points of $g$) is odd.
\footnote{The lemma implies that the Van Kampen obstruction of $P$ is nonzero even modulo $2$, or equivalently, $P$ does not admit a `$\Z_2$-almost $2$-embedding' in $\R^4$.
For an elementary exposition and an alternative proof see \cite{Sk14}.}
\end{Lemma}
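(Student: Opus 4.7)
The plan is the classical two-step strategy: first prove that $v(g)\bmod 2$ depends only on $P$ (not on $g$), then evaluate it on one explicit map.

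\emph{Invariance.} Given two general-position PL maps $g_0,g_1\colon P\to\R^4$, I would join them by a general-position PL homotopy $G\colon P\times I\to\R^4$, and pass to its track
$\hat G\colon P\times I\to\R^4\times I$, $(x,t)\mapsto(G(x,t),t)$.
For each pair $\{\sigma,\tau\}$ of disjoint $2$-simplices of $P$, general position makes
$\hat G(\sigma\times I)\cap\hat G(\tau\times I)\subset\R^5$
a compact PL $1$-manifold. Because $\sigma\cap\tau=\emptyset$ (hence $\partial\sigma\cap\partial\tau=\emptyset$), its boundary lies entirely in $\R^4\times\{0,1\}$ and equals $(g_0\sigma\cap g_0\tau)\sqcup(g_1\sigma\cap g_1\tau)$. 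A compact $1$-manifold has an even number of boundary points, so $|g_0\sigma\cap g_0\tau|\equiv|g_1\sigma\cap g_1\tau|\pmod 2$. Summing over all unordered disjoint pairs yields $v(g_0)\equiv v(g_1)\pmod 2$.

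\emph{Computation.} I would take the affine map $g\colon P\to\R^4$ sending $p_i\mapsto(i,i^2,i^3,i^4)$ for $i=1,\dots,7$ (moment curve) and extending linearly on each simplex. Each pair of disjoint $2$-simplices uses exactly six of the seven vertices, so $v(g)$ splits into a sum over the $\binom{7}{6}=7$ six-element subsets $S\subset\{p_1,\dots,p_7\}$. For such an $S=\{p_{i_1},\dots,p_{i_6}\}$ with $i_1<\dots<i_6$, the six images are in general affine position in $\R^4$, so Radon's theorem gives a unique partition of $S$ whose two image convex hulls meet. The essentially unique affine dependence $\sum_k\lambda_k g(p_{i_k})=0$, $\sum_k\lambda_k=0$ is the one produced by the vanishing $5$th divided difference of every polynomial of degree $\le 4$, so $\lambda_k\propto 1/\prod_{l\ne k}(i_k-i_l)$, whose sign is $(-1)^{6-k}$. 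The alternating signs pin down the Radon partition as $\{p_{i_1},p_{i_3},p_{i_5}\}$ versus $\{p_{i_2},p_{i_4},p_{i_6}\}$, of type $(3,3)$, contributing exactly one intersection point between two disjoint triangles; the remaining nine partitions of $S$ into two triples give disjoint triangle images. Summing over the seven choices of $S$ gives $v(g)=7$, which is odd.

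The conceptually delicate step is the divided-difference sign computation that pins down the Radon partition for moment-curve points; the rest is standard general-position PL reasoning. A small amount of care is also needed to ensure the PL homotopy is sufficiently generic that, for each fixed disjoint pair $\{\sigma,\tau\}$, the $1$-manifold $\hat G(\sigma\times I)\cap\hat G(\tau\times I)$ has no interior singularities (higher-order coincidences involving a third simplex are isolated and can be perturbed away pairwise).
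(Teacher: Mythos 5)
The paper does not prove this lemma; it cites van Kampen \cite[Satz~5]{VK} and points to \cite{Sk14} for an elementary exposition. Your strategy (homotopy-invariance of $v(g)\bmod 2$, then evaluation on the moment curve) is the standard elementary approach found there, so the overall plan is sound.

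However, your invariance step has a genuine gap. You claim that, for a general-position homotopy track $\hat G$, the $1$-manifold $\hat G(\sigma\times I)\cap\hat G(\tau\times I)$ has boundary only in $\R^4\times\{0,1\}$, on the grounds that $\partial\sigma\cap\partial\tau=\emptyset$. That is false: the interior boundary comes from $\hat G(\partial\sigma\times I)\cap\hat G(\tau\times I)$ (and symmetrically), where $\partial\sigma\times I$ is $2$-dimensional and $\tau\times I$ is $3$-dimensional, so in the $5$-dimensional $\R^4\times I$ their images meet generically in a nonempty finite set of points. (Equivalently: in a $1$-parameter family $g_t$, the event ``an edge of $\sigma$ passes through the interior of $\tau$'' occurs at isolated times and is stable under perturbation.) Thus $|g_t\sigma\cap g_t\tau|$ for a \emph{single} disjoint pair $\{\sigma,\tau\}$ is \emph{not} invariant mod $2$ under homotopy. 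What is true, and what the argument actually requires, is a cancellation across pairs: when an edge $\alpha\subset\partial\sigma$ crosses $\Int\tau$ at time $t_0$, this event is an interior boundary point of the $1$-manifold for \emph{every} pair $\{\sigma',\tau\}$ with $\sigma'\supset\alpha$ and $\sigma'\cap\tau=\emptyset$. Since $P$ has $7$ vertices, $\alpha$ lies in $5$ triangles, and exactly $5-3=2$ of them are disjoint from a triangle $\tau$ with $\alpha\cap\tau=\emptyset$. So each such event changes the parity of exactly two summands, and the total $\sum_{\{\sigma,\tau\}}|g_t\sigma\cap g_t\tau|$ is invariant mod~$2$. (Cohomologically, this is the statement that $\sum_{\{\sigma,\tau\}}[\sigma\times\tau]$ is a $\Z_2$-cycle in the deleted product, so the intersection cocycle can be paired against it.) This combinatorial count is the crux of the invariance and cannot be dismissed as a routine genericity hypothesis; your final parenthetical about ``higher-order coincidences'' does not address it.

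The moment-curve computation is correct: for six of the seven points on the moment curve in $\R^4$, the divided-difference sign argument identifies the alternating Radon partition of type $(3,3)$, each $6$-subset contributes exactly one global double point, and $v(g)=\binom{7}{6}=7$, which is odd. So the second half of your proof is fine; only the invariance half needs the edge-in-two-triangles counting repair.
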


\begin{proof}[Proof of Theorem~\ref{t:example-2-4}]
Analogously to \cite[\S3.3]{FKT}, $K$ admits a $\Z$-almost $2$-embedding in $\R^4$.
Suppose to the contrary that there is a PL almost $2$-embedding $f\colon K\to \R^4$.
We may assume it is in general position.
Let us show that $f|_{S^2_p\sqcup S^2_m \sqcup T}$ satisfies the conditions (a), (b) and (c) of
the Singular Borromean Rings Lemma~\ref{l:bor} (this is essentially proved in \cite[Lemma 6]{FKT}).
This would give a contradiction by Lemma \ref{l:bor}.

Condition (a) is satisfied because $f$ is an almost $2$-embedding and because any
simplex in the triangulation of $T$ is vertex-disjoint from any simplex in $S^2_p$ and
from any simplex in $S^2_m$.

The complex $K$ contains the cone $p_4\ast p$, which is a disk disjoint from $S^2_m$.
Since $f$ is an almost 2-embedding, $f(p_4\ast p)\cap f(S^2_m)=\emptyset$.
Then $f(p)$ and $f(S^2_m)$ are unlinked modulo 2.
Analogously, $f(m)$ and $f(S^2_p)$ are unlinked modulo 2.

Extend $f|_{P_-}$ to a general position PL map $g:P\to \R^4$.
%, which we will denote by the same symbol.
Then the sphere $f(S^2_p)=g(S^2_p)$ and the circle $f(p)=g(p)$ are linked modulo 2 because
%$$ \lk\phantom{}_{mod2} (g(\Sigma_p),g(p))\ \equiv
$$|g(S^2_p) \cap g[p_1,p_2,p_3]|\ =
\ \sum_{\{i,j,k\} \subset \{4,5,6,7\}} |g[p_i,p_j,p_k] \cap g[p_1,p_2,p_3]|\ \overset{(1)}\equiv\ v(g)
\ \overset{(2)}=\ 1\ \in\ \Z_2,\quad\text{where}$$
\begin{itemize}
\item[(1)] holds because $f|_{P_-}$ is an almost 2-embedding, so $f(\sigma)\cap f(\tau)=\emptyset$ for all `other'  pairs $\sigma,\tau$;

\item[(2)] holds by Lemma~\ref{l:VK-complex}.
\end{itemize}
Analogously the sphere $f(S^2_p)=g(S^2_p)$ and the circle $f(p)=g(p)$ are linked modulo 2.
%$\lk_{mod2}(f(\Sigma_m),f(m))=1$.
\end{proof}

\subsection{Proof of Theorems~\ref{c:nld1} and \ref{thm_correspondance3}.a}\label{s:ornpro}

\begin{proof}[Proof of Theorem~\ref{thm_correspondance3}.a]
The case $r=k=2$ is known, cf. Remark \ref{r:hist}.c.
We present the proof for $r=3$, the generalization to arbitrary $r\ge3$ or to $r=2\le k-1$ is obvious
(because by the Global Disjunction Theorem~\ref{t:elim}.(a)-(b) assertion $(D_{k,r})$ is true for arbitrary $k\ge2$ and $k+r\ge5$).

{\it We first prove surjectivity}, i.e., that for any integer $l$ there is an ornament (actually a doodle) $f$ such that $\lk f = l$.

The case $l=0$ is trivial, we can take any doodle such that the images of its connected components lie in $3$ pairwise disjoint balls.

Consider now the case $l=\pm 1$.
Identify $B^{3k}$ with $B^k\times B^k \times B^k$.
Define the {\it Borromean ornament (doodle)} $f:\bigsqcup_{i=1}^3 S_i^{2k-1}\rightarrow S^{3k-1}=\partial B^{3k}$ by
$$fS_1^{2k-1}=\partial(B^k\times B^k\times \cdot),\quad fS_2^{2k-1}=\partial(B^k\times \cdot \times B^k),
\quad\text{and}\quad fS_3^{2k-1}=\partial(\cdot \times B^k\times B^k).$$
Clearly, $|\lk f|=1$.
By composing $f$ with the reflection of one of the spheres $S_i^{2k-1}$ we get a new ornament $f'$ such that
$\lk f'=-\lk f$.
So $\{\lk f, \lk f'\}=\{-1,1\}$.

Let $f_0,f_1$ be two ornaments, their images lying in disjoint balls.
Connect each of the connected components of $f_0$ with the respective connected component of $f_1$ by a thin tube
and denote the obtained doodle by $f_2$.
Clearly, $\lk f_2=\lk f_0+\lk f_1$.
So the case of general $l$ follows from the cases $l=0$ and $l=\pm 1$.

{\it We now prove injectivity.}
We have to prove that if $f_0,f_1 :  \bigsqcup_{i=1}^3 S_i^{2k-1} \rightarrow S^{3k-1}$ are two ornaments such that
$\lk f_0 = \lk f_1$, then $f_0$ and $f_1$ are ornament concordant.

Take a general position PL map
$F :  (\sqcup_{i=1}^3 S_i^{2k-1}) \times I \rightarrow S^{3k-1} \times I$
such that
$
F(\cdot , 0) = f_0(\cdot) \times  0$  and  $F(\cdot,1) = f_1(\cdot) \times 1.
$
Since $\lk f_0 = \lk f_1$, the set
$
F(S_1^{2k-1} \times I) \cap F(S_2^{2k-1} \times I) \cap F(S_3^{2k-1} \times I)
$
consists of pairs of $3$-fold points of opposite signs.
Each such pair can be eliminated by the Global Disjunction Theorem~\ref{t:elim}.(a)-(b) applied to
$K=\bigsqcup_{i=1}^3 S_i^{2k-1}\times I$.
\end{proof}

\begin{figure}[h]
\centerline{\includegraphics[width=6cm]{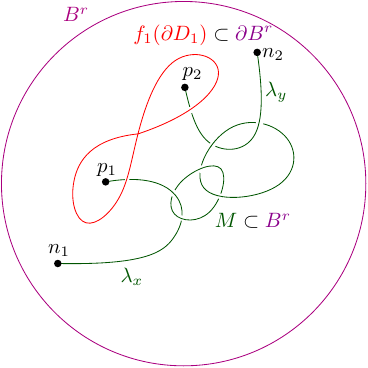} \qquad
\includegraphics[width=6cm]{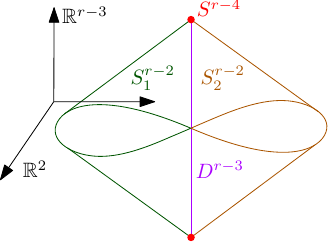} }
\caption{(a) To Lemma \ref{l:cdm1}.
(b) To the proof of Lemma \ref{l:cdm1}.}
\label{f:join}
\end{figure}

\begin{Lemma}\label{l:cdm1}
For each $r\ge3$ there is a proper general position PL map $f:\partial D_1\sqcup D_2\sqcup\ldots\sqcup D_r\to B^{r}$, where $D_j$ is a copy of $(r-1)$-disk, such that
\begin{enumerate}
%[(i)]
\item $M:=fD_2\cap\ldots\cap fD_r$ is a proper oriented submanifold of $B^r$ and $\partial M=\{p_1,p_2,n_1,n_2\}\subset \partial B^r$, where the points $p_1,p_2$ have positive sign and the points $n_1,n_2$ have negative sign (the signs are defined as the signs of intersection points of $r-1$ oriented $(r-2)$-dimensional spheres in $S^{r-1}$),

\item for any generic oriented path $\lambda$ in $B^r$ from $p_j$ to $n_i$ and any proper extension $g:D_1\sqcup D_2\sqcup\ldots\sqcup D_r\to B^r$ of $f$ we have $gD_1\iprod\lambda=(-1)^j$.
\end{enumerate}
\end{Lemma}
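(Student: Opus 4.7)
The plan is to construct $f$ separately on the two components $\partial D_1$ and $D_2\sqcup\dots\sqcup D_r$, then to verify (2) by a winding-number computation. First fix four distinct points $p_1,p_2,n_1,n_2\in S^{r-1}=\partial B^r$.

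For $f|_{D_2\sqcup\dots\sqcup D_r}$, I would arrange the $(r-1)$-disks $fD_2,\dots,fD_r$ in general position in $B^r$ so that their common intersection $M=fD_2\cap\dots\cap fD_r$ is a properly embedded oriented $1$-manifold consisting of two disjoint arcs with $\partial M=\{p_1,p_2,n_1,n_2\}$ and the prescribed signs. Concretely one may take $fD_2,\dots,fD_{r-1}$ as small perturbations of affine hyperplanes whose common intersection is a $2$-dimensional disk $\Sigma\subset B^r$, and $fD_r$ as a slightly bent $(r-1)$-disk meeting $\Sigma$ transversely along two arcs; orientations of the $D_i$ are chosen so that $p_1,p_2$ are positive and $n_1,n_2$ are negative endpoints of $M$.

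For $f|_{\partial D_1}\colon S^{r-2}\to S^{r-1}$ I would construct an immersed $(r-2)$-sphere whose winding-number function
\[
\nu\colon S^{r-1}\setminus f\partial D_1 \to\Z,\qquad \nu(x):=\deg\bigl(f|_{\partial D_1}\colon S^{r-2}\to S^{r-1}\setminus\{x\}\simeq S^{r-2}\bigr),
\]
takes the values $\nu(p_1)=-1$, $\nu(p_2)=+1$, $\nu(n_1)=\nu(n_2)=0$. For $r=3$ this is realized by a figure-eight curve in $S^2$ whose two lobes encircle $p_1$ clockwise and $p_2$ counterclockwise, with $n_1,n_2$ in the exterior region (cf.\ Figure~\ref{f:r=3}). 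For $r\ge 4$ the analogous construction takes two small $(r-2)$-spheres around $p_1$ and $p_2$ with opposite orientations and band-sums them along a thin arc in $S^{r-1}$, producing an immersed $(r-2)$-sphere whose self-intersection has dimension $r-3$ and lies in a regular neighborhood of the band.

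The key identity to establish is
\[
gD_1\iprod\lambda\;=\;\nu(p_j)-\nu(n_i),
\]
which equals $(-1)^j$ by the above choice of winding numbers. Independence from the extension follows because any two extensions of $f|_{\partial D_1}$ differ by an $(r-1)$-cycle supported in $\Int B^r$ and bounding an $r$-chain in $\Int B^r$ whose multiplicity function vanishes on $\partial B^r$, so the difference has zero intersection with $\lambda$. Independence from the interior of $\lambda$ follows because any two paths from $p_j$ to $n_i$ differ by a loop $L\subset\Int B^r$, which bounds a $2$-chain in $\Int B^r$ disjoint from $f\partial D_1$, forcing $L\iprod gD_1=0$. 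Decomposing $\lambda$ into a short inward radial segment from $p_j$, an arc in $\Int B^r$ arranged to miss $gD_1$, and a short outward radial segment to $n_i$, each radial piece contributes the local degree of $f|_{\partial D_1}$ at its boundary endpoint, yielding $\nu(p_j)-\nu(n_i)$. The most delicate step is this last identity --- especially verifying the sign of the radial contribution --- but the construction has enough flexibility in the orientations of $f|_{\partial D_1}$ and of each $D_i$ to match any sign convention, so sign issues reduce to routine bookkeeping.
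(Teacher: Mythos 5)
Your construction of $f$ and the formula $gD_1\iprod\lambda = \nu(p_j)-\nu(n_i)$ match the paper's approach (the paper realizes the figure-eight sphere via the join $f|_{\partial D_1}:=8*\id S^{r-4}$, which is a cleaner way to make precise your band-sum picture, and writes $\lk(8,n_i-p_j)$ for your winding-number difference), but your \emph{proof} of that key identity is broken. You propose to decompose $\lambda$ into two short radial segments near $p_j,n_i$ plus a middle arc ``in $\Int B^r$ arranged to miss $gD_1$'' and claim each radial piece contributes $\nu(\cdot)$. This cannot work: the endpoints $p_j,n_i$ lie in $M=fD_2\cap\dots\cap fD_r$, which is disjoint from $f\partial D_1=gD_1\cap S^{kr-1}$ by general position, so a \emph{short} radial segment near $p_j$ (or $n_i$) does not meet $gD_1$ at all and contributes $0$, not $\nu(p_j)$. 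If in addition the middle arc missed $gD_1$ you would conclude $gD_1\iprod\lambda=0$, contradicting the target value $(-1)^j\ne 0$; in fact, precisely because the answer is nonzero, no path from $p_j$ to $n_i$ can avoid $gD_1$.

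The paper's route avoids this by swapping $\lambda$ for a path $\pi\subset S^{r-1}$ from $p_j$ to $n_i$: since $\lambda\cup(-\pi)$ is a $1$-cycle in $B^r$ and $gD_1$ represents zero in $H_{r-1}(B^r,S^{r-1})$, one has $gD_1\iprod(\lambda\cup(-\pi))=0$, hence $gD_1\iprod\lambda=gD_1\iprod\pi=f\partial D_1\iprod\pi$; and the last quantity is exactly the winding-number difference, because each transverse crossing of $\pi$ with $f\partial D_1$ changes the winding number by $\pm1$. That is the argument you need in place of the radial decomposition. Your ``independence from the extension'' and ``independence from the interior of $\lambda$'' paragraphs are essentially correct (they are small homology arguments, modulo the usual care about paths touching $\partial B^r$), and with the substitution above your proof would go through. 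One further minor point: for $r\ge4$ the statement that a band-sum of two oppositely-oriented $(r-2)$-spheres in $S^{r-1}$ ``produces an immersed sphere with $(r-3)$-dimensional self-intersection'' needs justification, since a naively chosen thin band gives an \emph{embedded} sphere (whose winding numbers can only be $\{0,\pm1\}$, never $\{-1,0,+1\}$); the band must be ``twisted'' to match the orientations, and this twist is exactly what creates the self-intersection. The join $8*\id S^{r-4}$ packages this twist explicitly and is the recommended way to write it down.
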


\begin{proof}
It is easy to define the map $f$ on $D_2\sqcup\ldots\sqcup D_r$ so that the property $(1)$ is satisfied.
Let us now define $f$ on $\partial D_1$.

Identify $S^{r-1}=\partial B^r$ with $S^2 * S^{r-4}$, and $S^{r-2}=\partial D_1$ with $S^1*S^{r-4}$.
(This works for $r=3$, when $S^{r-4}=\emptyset$.)
Without the loss of generality we may assume that $\{ p_1,p_2, n_1,n_2\}\subset S^2*\emptyset\subset S^{r-1}$.
Let $8:S^1\rightarrow S^2$ be a map whose image is figure ``8'' winding $1$ time around $p_1$, $-1$ time around $p_2$, and $0$ times around $n_1$ and $n_2$, i.e. $\lk(8,n_i)=0$ and $\lk(8,p_j)=(-1)^{j+1}$.
Now define $f|_{\partial D_1}:\partial D_1\rightarrow S^{r-1}$ by $f:=8*\id S^{r-4}$ (see Figure \ref{f:join}.b).

Let us prove that $f$ satisfies $(2)$.
Let $\pi$ be a generic oriented path in $S^2=S^2*\emptyset\subset S^{r-1}$ from $p_j$ to $n_i$.
Then
$$
gD_1\iprod \lambda=gD_1\iprod(\lambda\cup-\pi) + gD_1\iprod \pi =
0 + f\partial D_1\iprod \pi = 8\iprod \pi =\lk(8,n_i-p_j)=0-(-1)^{j+1}=(-1)^j.
$$
%The rest of the property $(2)$ is proved analogously.
\end{proof}

\begin{proof}[Proof of Theorem~\ref{c:nld1}]
Let us prove that for each map $f$ given by Lemma~\ref{l:cdm1} the ornament $f|_{\partial D}$ is as required.
Extend $f$ to $D_1$ properly and generically in an arbitrary way (e.g., by coning over a generic point).

{\it Proof that $f D_1 \iprod \ldots \iprod f D_r = 0$.}
By the property $(1)$ of Lemma~\ref{l:cdm1} (and possibly by exchanging $n_1,n_2$)
we may assume without the loss of generality that $M$ consists of
generic oriented paths $\lambda_j$ from $p_j$ to $n_j$, $j=1,2$, and
%$\bullet$ a generic oriented path $\lambda_{2,2}$ from $p_2$ to $n_2$,
a union $\omega$ of disjoint embedded circles (see Figure \ref{f:join}.a).
Then by the property $(2)$ of Lemma~\ref{l:cdm1} we have
$f D_1 \iprod \ldots \iprod f D_r = f D_1\iprod (\lambda_1\sqcup \lambda_2 \sqcup \omega) = -1+1+0 = 0$.

{\it Proof that $g D_1 \cap \ldots \cap g D_r \neq \emptyset$ for any other proper generic map
$g \colon D \to B^r$ such that $f=g$ on $\partial D$.}
Since $f=g$ is generic on the boundary, we have that $M':=gD_2\cap\ldots\cap gD_r$ is a relative $1$-dimensional integer homology cycle in $B^r$ and $\partial M'=\{ p_1,p_2, n_1,n_2\}$.
Without the loss of generality (and possibly by exchanging $n_1,n_2$),
we may assume that $M'$ contains an oriented path $\lambda_1$ from $p_1$ to $n_1$.
By the property $(2)$ of Lemma~\ref{l:cdm1}, $p_1$ and $n_1$ are in the different connected components of
$B^r\setminus gD_1$.
So
$$\emptyset\neq gD_1\cap \lambda_1\subset gD_1\cap M'=g D_1 \cap \ldots \cap g D_r.$$
\end{proof}

\section{Discussion and open problems}\label{s:disc}

\begin{Remark}
\label{r:hist}

\begin{enumerate}[label=\textup{(\alph*)}]
\item Analo\-gous\-ly to \cite[\S5]{MW}, it can perhaps be shown that the analogue of Theorem \ref{t:tve} holds for
$d\geq 2r$, by using the results in the present paper (the Global Disjunction Theorem~\ref{t:elim} below)
to rewrite the proofs of \cite[Thm.~11]{MW} with $k \ge 2$ instead of $k\ge3$.
Since the necessary facts about prismatic maps are not gathered in one easily citable statement in the current
version of \cite[\S5]{MW} but dispersed throughout the text, for simplicity of presentation we focus here on the shorter argument for $d\geq 2r+1$.

\item Theorem \ref{t:z-alm3} for $r=2$ was a step in the proof of a classical algebraic criterion of van Kampen, Shapiro and Wu for embeddability of simplicial $n$-complexes into $\R^{2n}$ \cite{VK, Sh57, Wu65}, see survey \cite[Theorem 4.1]{Sk08}.
Both this criterion and Theorem \ref{t:z-alm3} for $r=2$ were generalized
%for $n$-complexes in $\R^d$
by Haefliger and Weber who showed that an $n$-complex $K$ embeds into $\R^d$ iff there is a $\Z_2$-equivariant map from the deleted product $K^{\times2}_\Delta$ to $S^{d-1}$, provided $d\ge3(n+1)/2$ \cite{Ha63, We67}, see survey
\cite[Theorem 8.1 and Proposition 8.4]{Sk08}.
%The generalization involved the configuration space of {\it pairs} of distinct points of the complex
%(cf. definition of $K^{\times r}_{\Delta}$ below). generalization has a
\red{The dimension restriction is related to the existence of $n$-dimensional Borromean rings in $\R^d$ for $d<3(n+1)/2$, i.e. of an embedding $S^n\sqcup S^n\sqcup S^n\to \R^d$ whose pairwise linking coefficients are zeroes but which is not isotopic to the standard embedding.
(For the definition of the linking coefficient for $d\leq 2n$, see, e.g., \cite[\S3]{Sk08}.)}
One might conjecture that the dimension restriction $d\ge3(n+1)/2$ can be weakened if one uses the configuration space of {\it $r$-tuples} of distinct points, and that methods of this paper would allow to prove such a conjecture.
Surprisingly, this is not so, see \cite[end of \S5]{Sk08}, \cite[end of \S1]{CS}.
An explanation is that the notion of an embedding is more subtle than the notion of almost $r$-embedding.
%(or immersion)

On the other hand, for a generalization of Theorem \ref{t:z-alm3} to $n$-complexes in $\R^d$ keeping $r$ arbitrary
see \cite[Theorem~2]{MW'}, \cite[Theorem 2]{MW16}, \cite[Theorems 1.1-1.3]{Sk17}\red{, \cite{Sk'}}.

\item The case $r=2$, $k\ge3$ of the Local Disjunction Theorem \ref{l:ld+3} is a version of the Whitney trick; the subcase $k=2$ is an exercise on elementary link theory.
%\cite[\S3.1 `Linking number of curves in space']{Sk}.
% and footnote after Theorem \ref{t:orn} below.)
%means that {\it a 2-component link $S^1\sqcup S^1\to S^3$ with zero linking number bounds a map
%$D^2\sqcup D^2\to B^4$ of disks having disjoint images},
Here is a well-known proof for $r=k=2$ for the general position case when $f|_{\partial D}$ is an embedding.
Given a 2-component 1-dimensional link in $S^3$, one can unknot one component in the complement of the second by crossing changes (or by finger moves, guided along arcs) \cite[Theorem 3.8]{PS96}.
By the assumption the linking number is zero.
The linking number is preserved under crossing changes.
So after crossing changes we obtain a link formed by the unknot and the component
which shrinks in the complement of the unknot.
For such link the assertion is trivial.

\red{The case $r=k=2$ of the Local Disjunction Theorem \ref{l:ld+3} can also be proved using Casson's finger
moves analogously to \cite[Proof of Disjunction Lemma 2.1.b]{Sk00}, and is clear when $f|_{D_1}$ is an unknotted embedding.}

%\item An {\it $s$-component $r$-multiplicity ornament in $S^d$} is a PL general position map
%$f:K_1\sqcup\ldots\sqcup K_s\to S^d$ of disjoint union of $s$ simplicial complexes such that the intersection of any $r$ objects among $fK_1,\ldots,fK_s$ is empty.
%Although here we only consider the case $s=r$, our results have straightforward generalizations to $s>r$.
%2-multiplicity ornaments were widely studied under the name of {\it link maps}, mostly for the case when each $K_j$ is a sphere, see \cite{Sk00} and references therein.
%
%\item There is a concordance version of Theorem \ref{t:genorn} for $p$-ornaments in $S^{kr-1}$, where
%$p:=k(r-1)-1$.
%It involves a complete invariant in $H^p(K_1;\Z)\times\ldots\times H^p(K_r;\Z)$.

\item In \cite{SSS} it is shown that
{\it for each $(n,d)$ such that $n+2\le d\le \frac{3n}2+1$ there exists a finite $n$-dimensional complex $K$ that admits an almost $2$-embedding in $\R^d$ but that does not embed into $\R^d$.}
(This example was used to show, for such $n,d$, the incompleteness of deleted product obstruction, which is defined before Proposition \ref{cor:equiv-alm}.)
For $d=2n=4$ this improves \cite{FKT} in a different direction than Theorem \ref{t:example-2-4}: {\it there exists a finite $2$-dimensional complex $K$ that admits an almost $2$-embedding in $\R^4$ but that does not embed into $\R^4$.}
\end{enumerate}
\end{Remark}

\begin{Remark}
\label{rem:ornaments}
\begin{enumerate}[label=\textup{(\alph*)}]

\item  Assume that $(d,n,r)=(2,1,3)$ (hence $2d=3n+1$).
In this case, a triviality criterion for ornaments is given in \cite{Me03};
it would be interesting to know if it is algorithmic and if it extends to a classification.
The $r$-linking number is not a complete invariant for doodles, e.g., there is a non-trivial $(2,1,3)$-doodle
with zero $3$-linking number.%
\footnote{This is written in \cite[bottom of p. 39 and fig.4]{FT} with a reference to a later paper.
It would be interesting to have a published proof.
It might be easier to obtain the proof using the `intersection' language,
see \S\ref{s:FKT-example}, rather than `commutators' language \cite{FT}.}
Thus the analogue of Theorem \ref{thm_correspondance3}.b for $k=1$ and $r=3$ is false.
(We conjecture that such an analogue is also false for $k=1$ and each $r\ge4$, cf. Theorem \ref{c:nld1}.)
See \cite{B} for a study of ornaments which are PL immersions, up to regular ornament homotopy
(they were called doodles, which is different from terminology of this paper).

\item There is a concordance version of Theorem \ref{t:genorn} for $p$-ornaments in $S^{kr-1}$, where
$p:=k(r-1)-1$. It involves a complete invariant in $H^p(K_1;\Z)\times\ldots\times H^p(K_r;\Z)$.

\item An {\it $s$-component $r$-multiplicity ornament in $S^d$} is a PL general position map
$f:K_1\sqcup\ldots\sqcup K_s\to S^d$ of disjoint union of $s$ simplicial complexes such that the intersection of any $r$ objects among $fK_1,\ldots,fK_s$ is empty.
Although here we only consider the case $s=r$, our results have straightforward generalizations to $s>r$.
2-multiplicity ornaments were widely studied under the name of {\it link maps}, mostly for the case when each $K_j$ is a sphere, see \cite{Sk00} and references therein.
\end{enumerate}
\end{Remark}

\begin{Remark}
\label{rem:bor1}
\begin{enumerate}[label=\textup{(\alph*)}]
\item Our proof of the Singular Borromean Rings Lemma \ref{l:bor} for $n=2$ and $l=1$ gives a shorter,
elementary proof of the result from \cite{FKT} mentioned before Theorem \ref{t:example-2-4}.

\item In \cite{ST}, the Singular Borromean Rings Lemma \ref{l:bor} is used to study algorithmic aspects of almost 2-embeddability of complexes in $\R^d$.

\item The analogue of the Singular Borromean Rings Lemma~\ref{l:bor} for $n=l+1=1$ is true, although our proof does not work for this case.
The analogue of Lemma~\ref{l:bor} for $n=l$ is false, but would conjecturally become true if we add an additional condition that $f(S^n_p)$ and $f(S^n_m)$ are unlinked modulo 2.
%analogous to the classical statement on Borromean rings in $\R^3$, which is obtained by taking $n=l=1$ and
For the corresponding construction of Borromean rings in $\R^3$ see \cite[\S4.4 `Borromean rings and commutators']{Sk}.

%\item The Lemma is known for $n=2$ and $l=1$ \cite[Theorem 1]{KT}.
%See proof and discussion in \S\ref{s:FKT-example}.

\item Lemma \ref{l:bor} for $n=2$, $l=1$ and \emph{embedded} torus $f(T)$ was proved in
\cite[Theorem 1 and the middle paragraph on page 53]{KT} (in a much more general form).
We are grateful to S. Krushkal and P. Teichner for explanation of how the proof of \cite{KT} works for the case of \emph{non-embedded} torus, as well as for sketching a short direct proof of Lemma \ref{l:bor} for $n=2$, $l=1$ involving the Milnor group of the complement.
\red{It would be nice if these arguments were publicly available.}
It would be interesting to know if these arguments can be generalized to higher dimensions.

\red{Also, Lemma \ref{l:bor} for \emph{embedded} spheres $f(S^n_p)$ and $f(S^n_m)$ was (not stated but) essentially proved in \cite{FKT, SSS}, cf. \cite[Borromean rings Lemma 4.4.3]{Sk}.
It would be interesting to know whether the proof of \cite{SSS} extends to the case of non-embedded spheres.
The proof in \cite{FKT} uses the fact that $f(S^2_p)$ and $f(S^2_m)$ are \emph{embedded} to deduce by Alexander duality that $H_2(\R^4 - f(S^2_p \sqcup S^2_m))=0$, and then applies the Stallings Theorem on the lower central series of groups \cite[Proof of Lemma~7]{FKT}.
This may fail if $f(S^2_p)$ and $f(S^2_m)$ are not embedded.

Comparing our proof with \cite{FKT, KT} illustrates in a geometric (more precisely, homological)
language the relation between Massey products and  commutators, cf. \cite{Po}.}

%See Turaev http://mi.mathnet.ru/znsl2029
%Porter http://www.ams.org/journals/tran/1980-257-01/S0002-9947-1980-0549154-9/
%Stein http://www.sciencedirect.com/science/article/pii/0166864189900540
%and Cochran https://books.google.ru/books?id=cUmjfOIlBIAC&lpg=PR1&pg=PR1#v=onepage&q&f=false
%Lemma~\ref{l:bor} might be easier if one assumes that the restriction $f|_{\Sigma_p \sqcup \Sigma_m}$
%is link homotopic to the trivial link. However, this assumption is not always fulfilled \cite{FR}.

%\item In \cite{SSS} it is shown that
%{\it for each $(n,d)$ such that $n+2\le d\le \frac{3n}2+1$ there exists a finite $n$-dimensional complex $K$ that admits an almost $2$-embedding in $\R^d$ but that does not embed into $\R^d$.}
%(This example was used to show, for such $n,d$, the incompleteness of deleted product obstruction, which is defined before Proposition \ref{cor:equiv-alm}.)
%For $d=2n=4$ this improves \cite{FKT} in a different direction than Theorem \ref{t:example-2-4}: {\it there exists a finite $2$-dimensional complex $K$ that admits an almost $2$-embedding in $\R^4$ but that does not embed into $\R^4$.}
\end{enumerate}
\end{Remark}

%\item Our proof shows that in the Singular Borromean Rings Lemma~\ref{l:bor} (and hence in Theorem %\ref{t:example-2-4}) $\R^4$ can be replaced by any PL 4-manifold that is a $\Z_2$-homology sphere.

\begin{Remark}[Open problems]
\label{rem:open}
\begin{enumerate}[label=\textup{(\alph*)}]
\item Does the analogue of Theorem \ref{t:z-alm3} holds for $k=1$ and large enough $r$?
Cf.\ Theorems \ref{c:nld1} and \ref{t:elim}.d.

\item Is there an example for Theorem \ref{c:nld1} for which $f|_{\partial D}$ is an embedding?

\item Does the analogue of Theorem~\ref{thm_correspondance3}.b hold for $k=2$? In Theorem~\ref{thm_correspondance3}, can ornament [doodle] concordance be replaced by ornament [doodle] homotopy? Here, an ornament [doodle] concordance $F$ is an ornament [doodle] homotopy if it is `level preserving', i.e., if $F(\cdot,t) \subset S^m \times \{ t \}\quad\text{for each}\quad t\in I.$

\item {\bf Gromov's problem} \cite[2.9.c]{Gr}.
{\it Is it correct that if
%$k\ge2$ and
$r$ is not a prime power, then for each compact subset $K$ of $\R^m$ for some $m$, having Lebesgue dimension
$\dim K=(r-1)k$, there is a continuous map $X\to\R^{kr}$ each of whose point preimages contains less than $r$ points?}

The analogue of this problem for polyhedra $K$ and almost $r$-embeddings instead of maps without $r$-fold points is true by Theorem \ref{c:all}.a.
%In fact, Gromov stated this question for each compact subset $K$ of $\R^m$ for some $m$,
%having Lebesgue dimension $\dim K=(r-1)k$, and without the $k\ge2$ restriction.
%We state the question for polyhedra to make it more accessible and to emphasize that even this case is %non-trivial.
\red{We suspect that the answer is `no'.
If $K$ is a finite simplicial complex and one requires the map  $K\to\R^{kr}$ to be PL instead of continuous,
the question seems to be more complicated.}
%, cf. \cite{Ma}.

\item Let $X$ be a compact subset of $\R^m$ for some $m$.
Is it correct that $\dim(X\times X\times X)<6n$ if and only if any continuous map $X\to\R^{3n}$ can be arbitrary close approximated by a continuous map without triple points?
This is interesting for `fractal' $2n$-dimensional compacta $X$, for which
%\linebreak
$\dim(X\times X\times X)<3\dim X$.

%For which $d,r$ does there exist an almost $r$-embedding of a $(d+1)(r-1)$-simplex in $\R^{d-1}$,
%or of a $[(d+1)(r-1)+1]$-simplex in $\R^d$? Cf. \cite{MW', MW16, Sk17}, \cite[\S5]{BFZ}.

\red{\item Which of the results of \cite{DRS, ST91, RS98, Sk00}, \cite[Disjunction Theorem 3.1]{Sk02}, \cite[Theorems 4.4, 5.4, 5.5, Example 5.9.c]{Sk08} on $2$-fold intersections can be generalized to $r$-fold intersections?}
\end{enumerate}
\end{Remark}

\end{document}